\newcommand{\datalabeled}{\cD_{\text{L}}}
\newcommand{\dataunlabeled}{\cD_{\text{U}}}
\newcommand{\samplespacelabeled}{\cS_{\text{L}}}
\newcommand{\samplespaceunlabeled}{\cS_{\text{U}}}
\newcommand{\indicator}{\mathbbm{1}}
\newcommand{\TN}{\mathrm{TN}}
\newcommand{\betafloor}{\lfloor \beta\rfloor}
\newcommand{\upn}{^{(n)}}
\begin{document}

\title{Minimax optimal approaches to the label shift problem in non-parametric settings}
\author{\name Subha Maity \email smaity@umich.edu \\
        \name Yuekai Sun \email yuekai@umich.edu\\
        \name Moulinath Banerjee \email moulib@umich.edu\\
       \addr Department of Statistics\\
       University of Michigan \\
       Ann Arbor, MI}
        \editor{Amos Storkey}
    \date{}
   
  \maketitle

\begin{abstract}
We study the minimax rates of the label shift problem in non-parametric classification. In addition to the unsupervised setting in which the learner only has access to unlabeled examples from the target domain, we also consider the setting in which a small number of labeled examples from the target domain is available to the learner. Our study reveals a difference in the difficulty of the label shift problem in the two settings, and we attribute this difference to the availability of data from the target domain to estimate the class conditional distributions in the latter setting. We also show that a class proportion estimation approach is minimax rate-optimal in the unsupervised setting.
\end{abstract}

\begin{keywords}
  Binary classification,  non-parametric classification, semi-supervised classification, transfer learning 
\end{keywords}

\section{Introduction}
A key feature of general intelligence is the transfer of knowledge from one task to another similar but non-identical task. However, most machine learning (ML) methods are not designed for such out-of-distribution (OOD) generalization. This limitation has led to  embarrassing performances of ML models in several high-profile applications \citep{angwin2016Machine,dastin2018Amazon}. Transfer learning attempts to improve the OOD generalization performance of ML models and has attracted attention in a variety of application areas: computer vision \citep{tzeng2017adversarial,gong2012geodesic}, speech recognition \citep{huang2013cross} and genre classification \citep{choi2017transfer}. Transfer learning is also known as domain adaptation, and we refer to \citet{pan2009survey,weiss2016survey} for surveys of transfer learning.

Despite its empirical success, there is limited knowledge of the fundamental limits of transfer learning. In this paper, we study the fundamental limits of transfer learning for label shift problem under a binary classification setup. We posit the learner has a (labeled) training dataset from the source distribution/domain $P$ and either a small labeled dataset or an unlabeled dataset from the target domain $Q$. The learner knows $P$ is similar, but not identical to $Q$ (the differences between $P$ and $Q$ will be made precise later). The learner's task is to combine this knowledge about the similarities between $P$ and $Q$ with data from $P$ for inference in $Q$. 


At a high-level, there are two lines of theoretical work in transfer learning. The first line of work 
focuses on obtaining bounds on the worst-case performance of ML models in similar target domains \citep{ben-david2010theory,mansour2009Domain}. Here similarity between the source and target domain is measured by some notion of distance/divergence between probability distributions. Although general, such bounds are usually pessimistic, especially if the learner has access to some data from the target domain.


The second line of work focuses on problems in which the learner has some data from the target domain. In order to leverage the data from the source domain, the learner must make some assumptions on the similarities between the source and target domains. Such assumptions usually take the form of invariances between the source and target domain. To keep things simple, we assume it is possible to partition each sample $Z$ from the source and target domains into two parts: $Z = (U,V)$. We factorize the source (resp.\ target) distributions as $P(U,V) = P(U\mid V)P(V)$ (resp.\ $Q(U,V) = Q(U\mid V)Q(V)$). By picking $U$ and $V$ carefully, it is possible to obtain many common transfer learning settings.

If the conditional factor changes between the source and target domains ($P(U\mid V) \ne Q(U\mid V)$) while the marginal factor remains the same ($P(V) = Q(V)$), then there is \emph{conditional drift} between the source and the target domains. A prominent example of conditional drift is posterior drift \citep{cai2019Transfer,maity2021linear}, in which the marginal distribution of the features $X$ remains the same between the source and target domains, but the conditional distribution of the label / response given the features changes. On the other hand, if the marginal factor changes between the source and the target domains ($P(V) \ne Q(V)$) while the conditional factor remains the same ($P(U\mid V) = Q(U\mid V)$), then there is \emph{marginal drift} between the source and target domains. Prominent examples of marginal drift include covariate shift \citep{kpotufe2018Marginal, zhang2015multi} ($U = Y$, $V = X$) and label shift \citep{storkey2009training,saerens2002adjusting,lipton2018Detecting,scholkopf2012causal, zhang2015multi} ($U = X$, $V = Y$).

In this paper, we focus on the label shift problem: $P(X\mid Y) = Q(X\mid Y)$ but $P(Y) \ne Q(Y)$. This problem arises in many application areas. For example, consider building a pneumonia detector. While the symptoms of pneumonia may not change from month to month, the prevalence of the disease in the population may increase in a month of pandemic (\eg\ January). There are two version of the label shift problem: the supervised version in which the learner has labeled data from the target domain and the unsupervised version in which the learner only has unlabeled data from the target domain. There has been a flurry of recent work (\eg\ \citet{lipton2018Detecting,azizzadenesheli2019Regularized,garg2020Unified}) on methods for the label shift problem, especially the unsupervised version. Our work complements this line of work by studying the fundamental limits of the label shift problem in a non-parametric setting.

The organization of rest of the paper follows: we formulate the supervised and unsupervised label shift problems and state the working assumption in Section \ref{sec:set-up}. We study the fundamental limits of the supervised and unsupervised label shift problems in Sections \ref{sec:labeled-target} and \ref{sec:unlabeled-target-data} respectively. In Sections \ref{sec:simulations} and \ref{sec:lb-proof}, we present simulation studies that confirm our theoretical results and prove the lower bound for the unsupervised label shift problem. Finally, we wrap up with a brief discussion of the implications of our results in section \ref{sec:discussion}.


\section{Setup}
\label{sec:set-up}
In this section, we forumulate the label shift problem and state the working assumptions. 

\subsection{Notations and definitions} For a random vector $ (X,Y)\in [0,1]^d \times \{0,1\} $ with distribution $ P $, we denote the marginal distribution of $ X $ by $ P_X $ and the marginal probability of the event $ \{Y=1\} $ by $ \pi_P. $ We denote the support of $P$ with $ \text{supp}(P) $. We use $ \indicator $ to denote the indicator function taking the value in $ \{0,1\}. $ We use the $\wedge\vee$ notation for min and max: $ a\wedge b \triangleq \min (a,b) $ and $ a\vee b \triangleq \max(a,b). $ Finally, $ \lambda(\cdot) $ denotes the Lebesgue measure of a set in a Euclidean space, and $ B(x,r) $ denotes the $ d $-dimensional (closed) Euclidean ball of radius $ r>0 $ with center $ x \in \reals ^d $. For a generic probability distribution $\mu$ the notation $X_1, \dots, X_n\overset{\ind}{\sim}\mu$ implies that $X_1, \dots X_n$ are independently distributed and each of them have distribution $\mu$.

\subsection{Label shift in nonparametric classification}
\label{subsec:set-up}
Let $ P $ and $ Q $ be two distributions on $ [0,1]^d \times \{0,1\}. $ We consider $P$ as the distribution of the samples from the source domain and $Q$ as that of the samples from the target domain. In the supervised version of the label shift problem, the learner has labeled data from the source and target domains:
\begin{align*}
  \datalabeled \triangleq & \Big\{ (X_1^P, Y_1^P), \dots (X_{n_P}^P, Y_{n_P}^P) \overset{\ind}{\sim}  P; \\ &  
   (X_1^Q, Y_1^Q), \dots (X_{n_Q}^Q, Y_{n_Q}^Q) \overset{\ind}{\sim}  Q    \Big\} \in \left( \cX\times \cY \right)^{(n_P+n_Q)} . 
\end{align*}
On the other hand, in the unsupervised version of the problem, the learner only has unlabeled data from the target domain:
\begin{align*}
  \dataunlabeled \triangleq &  \left\{ (X_1^P, Y_1^P), \dots (X_{n_P}^P, Y_{n_P}^P) \overset{\ind}{\sim}  P; \ X_1^Q, \dots X_{n_Q}^Q \overset{\ind}{\sim}  Q_X    \right\} \\
  & \in \left( \cX\times \cY \right)^{ n_P} \times \cX^{ n_Q}.  
\end{align*}
In both versions of the problem, the class conditionals in the source and target domains are identical: $ P(\cdot | Y) = Q(\cdot | Y) $. However, the (marginal) distributions of the labels differ: $ \pi_P\neq \pi_Q. $ 

Let $ G_0 \triangleq P(\cdot\mid Y=0)$ and $ G_1 \triangleq P(\cdot\mid Y=1)$ be the class conditionals. Note that in light of the equivalence of class conditionals in the source and target domains, we can (equivalently) define $ G_0 \triangleq Q(\cdot\mid Y=0)$ and $ G_1 \triangleq Q(\cdot\mid Y=1)$. The regression functions in the source and target domains are 
\[ \eta_P(x) \triangleq \begin{cases}
P(Y = 1|X=x) & \text{if } x\in \text{supp}(P_X)  \\
\frac12 & \text{otherwise}
\end{cases} \]  \[ \eta_Q(x) \triangleq \begin{cases}
Q(Y = 1|X=x) & \text{if } x\in \text{supp}(Q_X)  \\
\frac12 & \text{otherwise}
\end{cases}. \] 

In both versions of the label shift problem, the goal of the learner is to correctly classify samples from the target domain: learner wishes to learn a classifier $ \hat f :[0,1]^d \to \{0,1\} $ from the available data ($\datalabeled$ in the supervised version and $\dataunlabeled$ in the unsupervised version) that minimizes the classification error rate in the target domain $ Q (Y\neq \hat f(X))$. The Bayes classifier in the target domain is  
\[ 
f_Q^*(x) = \begin{cases}
0 & \text{if } \eta_Q(x) \le \frac12,\\
1 & \text{otherwise;}
\end{cases} 
\] 
\ie\ $f^*_Q \in \argmin_{h \in \cH } \Pr_Q (Y \neq h(X))$, where $\cH$ is the set of all measurable functions $ h: [0,1]^d \to \{0,1\} $. We consider the error rate of the Bayes classifier as a baseline and study the \textbf{excess risk} of learned classifier $\hat f$: 
\[  
\cE_Q (\hat f) = Q(Y \neq \hat f(X)) - Q(Y \neq f^*_Q(X)). 
\]  
The excess risk is a random quantity depending on the available data through the classifier $ \hat f$ and is known to have the following representation \citep{gyorfi1978rate}: 
\begin{equation}
\cE_Q(\hat f) = 2 \Ex_Q \left[\left| \eta_Q(X) - \frac12 \right| \indicator \{ \hat f(X) \neq f^*_Q(X) \}\right].
\end{equation}

To keep things simple, we assume that $G_0$ and $G_1$ are absolutely continuous with respect to the Lebesgue measure on $\reals^d$. We also assume that the marginal distribution of the features in the target domain $Q_X = \pi_QG_1 + (1-\pi_Q)G_0$ satisfies the \emph{strong density condition}. This is a common assumption in non-parametric classification (see \cite[Definition 2.2]{audibert2007fast}). 

\begin{definition}[strong density condition]
\label{def:strong-density}
A distribution $P$ defined on $\reals^d$ satisfies the strong density condition with parameters $ \mu_-, \mu_+, c_\mu, r_\mu>0 $ if
\begin{enumerate}
\item $P$ is absolutely continuous with respect to the Lebesgue measure on $ \reals^d$;
\item its support is regular: $\text{supp}(P)$ is compact and  $\lambda \left[ \text{supp}(P)  \cap B(x,r)  \right] \ge c_\mu \lambda[B(x,r)]$ for all $0 <r\le r_\mu$ and $x \in \text{supp}(P)$;
\item $\mu_- < \frac{d P}{d\lambda}(x) < \mu_+$ for all $x\in \text{supp}(P).$
\end{enumerate}
\end{definition}


We note that we only impose the strong density condition in the target domain because that is the domain in which we wish to study the performance of the classifier. Let $g_0$ and $g_1$ be the densities of $ G_0 $ and $ G_1 $ respectively (with respect to the Lesbegue measure on $\reals^d$). In terms of $g_0$ and $g_1$, the regression function in the target domain is
\begin{equation}
\eta_Q(x) = \begin{cases}
\frac{\pi_Q g_1 (x)}{\pi_Q g_1(x)+(1-\pi_Q)g_0(x) } & \text{if }  x \in \text{supp}(Q_X)  \\
\frac12 & \text{otherwise.}
\end{cases} 
\label{eq:regression-function}
\end{equation}


Inspecting \eqref{eq:regression-function}, we see that the main difficulty in estimating $\eta_Q$ is estimating the class conditional densities $ g_0$ and $g_1 $. The symmetry in the problem suggests that errors in estimating $g_0$ and $g_1$ affect the convergence rate of the excess risk equally. If the classes are imbalanced, then the excess risk depends on the estimation error of the rarer class. To measure class imbalance in our problem setup, we assume $\pi_P\in[\eps_P,1-\eps_P]$ for some $\eps_P > 0$. As we shall see, $\eps_P$ affects the effective sample size from the source domain in the minimax rate.

To quantify the hardness of estimating the class conditional densities $g_0$ and $g_1$, we impose standard smoothness conditions on them. For any $ s = (s_1, \dots , s_d)\in \bbN^d $ and $ x = (x_1, \dots , x_d) \in \text{supp}(Q_X) ,$ define $ |s| \triangleq s_1+\dots + s_d, \ s! \triangleq s_1!\dots s_d! $ and $ x^s = x_1^{s_1}\dots x_d^{s_d}. $ Let $ D^s $ denote the differential operator \[ D^s = \frac{\partial^{s_1+\dots + s_d}}{\partial x_1^{s_1}\dots \partial x_d^{s_d}}. \] For any $ g:\text{supp}(Q_X)\to\reals$ that is $ \lfloor \beta \rfloor $-times continuously differentiable at a point $ x_0\in \text{supp}(Q_X), $ we denote by $ g_{x_0}^{(\beta)} $ its Taylor expansion of degree $ \lfloor \beta \rfloor  $ at $ x_0 $:
\[ g_{x_0}^{(\beta)}(x) = \sum_{|s|\le \lfloor \beta \rfloor} \frac{(x-x_0)^s}{s!} D^s g(x_0). \]

\begin{definition}
[H\"older class]
\label{def:holder-smooth}
A function $ g:\Omega \to \reals $ is called $ (\beta , L, r_0) $-H\"older smooth ($\beta$-H\"older smooth in short)  if it is  $ \betafloor $-times continuously differentiable and satisfies \[ |g(x) - g_y^{(\beta)}(x)| \le L\|x-y\|_2^\beta \hspace{0.5in} \forall x\in B(y, r_0) \cap \Omega , \ y \in \Omega.  \] We denote the set of all such functions as   $ \Sigma(\beta, L, r_0) .$ 
\end{definition}




As we saw, the difficulty of the label shift problem depends on the hardness of estimating the class conditionals $g_0$ and $g_1$, so we assume they are $\beta$-H\"{o}lder smooth. Prior studies on the fundamental limits of covariate shift \citep{kpotufe2018Marginal} and posterior drift \citep{cai2019Transfer} have imposed similar smoothness conditions on the regression function instead of the class conditionals. This is because in those problems the Bayes classifier remains the same in the source and target domains:
\[\textstyle
\{x\in[0,1]^d\mid\eta_P(x) \le \frac12\} = \{x\in[0,1]^d\mid\eta_Q(x) \le \frac12\}.
\]
As the hardness of the transfer learning problem now depends on the hardness of estimating $\eta_P$,  it is important in both covariate shift and posterior drift to quantify the hardness of estimating $\eta_P$.  

\vspace{5mm}

We further introduce the margin condition to quantify the difficulty of the classification task in the target domain. It was introduced in \citet{tsybakov2004optimal} and adapted by \citet{audibert2007fast, cai2019Transfer, kpotufe2018Marginal} to study the convergence rate of the excess risk in binary classification problems. Intuitively, the condition restricts the probability mass around the Bayes decision boundary (region of the feature space such that $\eta_Q(x) \approx \frac12$). In other words, it implies $ \eta_Q(X)$ is far from $ \frac12 $ with appreciably high probability.

\begin{definition}
[margin condition for $ Q $]
\label{def:margin-condition}
The distribution $ Q $ satisfies the margin condition with parameter $ \alpha, $ if there exist $c_\alpha,  C_\alpha>0, $ such that  \[\text{for all } 0<t<c_\alpha, \hspace{0.3cm}  Q_X \left( 0< \left | \eta_Q(X) - \frac12 \right | \le t  \right) \le C_\alpha t^\alpha. \]
\end{definition}


We note that the condition becomes more stringent as $\alpha$ grows. We also note that if $ Q_X $ satisfies the strong density assumption and $ \beta (1\wedge\alpha) > 1, $ then there is no distribution $ Q $ such that the regression function $ \eta_Q $ crosses $ \frac12 $ in the interior of the support of $ Q_X $ (see \cite[Proposition 3.4]{audibert2007fast}). This leads to a trivial Bayes decision rule. In the rest of this paper, we rule out such settings by only considering target domains such that $\alpha\beta\le 1$.


Combining all the preceding restrictions, we consider the class $ \cP $ of source and target distribution pairs $ (P,Q) $ in our study of the label shift problem.

\begin{definition}
\label{def:distribution-class} The distribution class
$  \cP(\mu_-, \mu_+, c_\mu, r_\mu, \eps_P, \alpha, C_\alpha, \beta, C_\beta) $ (or $\cP$ in short) is the set of all  distribution pairs $ (P,Q) $ which satisfies the following conditions:
\begin{enumerate}
\item $ P(\cdot | Y) = Q(\cdot | Y) $ (label shift);
\item $ Q_X $ satisfies the strong density condition with parameters $ \mu = (\mu_-, \mu_+)$, $c_\mu>0$, $r_\mu>0$ (see Definition \ref{def:strong-density});
\item $\eps_P \le \pi_P \le 1-\eps_P$ for some $\eps_P > 0$;
\item $ g_0 $ and $ g_1$ are  locally $ \beta $-H\"older smooth (see Definition \ref{def:holder-smooth});
\item $ \eta_Q $ satisfies the margin condition (see Definition \ref{def:margin-condition});
\item $ \alpha\beta \le 1$, where $\alpha$ is the parameter of the margin condition and $\beta$ is the H\"older smoothness parameter.
\end{enumerate}
\end{definition} 


The goal of the learner is to learn a decision rule $ \hat f $ from all the available data (including data from both source and target domains) that has small excess risk in the target domain. To study the difficulty of the label shift problem in both supervised and unsupervised settings, we study their minimax risks as functions of the sample sizes in the source and target domains $n_P, n_Q$.

\section{Supervised label shift}
\label{sec:labeled-target}

In the supervised version of the label shift problem, the learner has access to a dataset $ \datalabeled $, which includes $ n_P $ labeled samples from the source domain and $ n_Q $ \emph{labeled} samples from the target domain. We assume the source and target distribution pair $(P,Q)$ is in $ \cP $ (see Definition \eqref{def:distribution-class}). 
First, we present an information-theoretic lower bound on the convergence rate of the excess risk in the supervised label shift problem. This is a lower bound on the performance of all learning algorithms which accept data $\datalabeled$ and return a classifier $f: [0,1]^d \to \{0,1\}$. Formally, such a learning algorithm is a map $\cA : \samplespacelabeled \to \cH$, where $ \samplespacelabeled \triangleq (\cX \times \cY)^{n_P+n_Q} $ is the space of possible datasets in the supervised label shift problem (the subscript `L' indicates the data from the target domain is labeled) and $\cH \triangleq \{ h : [0,1]^d \to\{0,1\} \}$ is the set of all possible classifiers on $[0,1]^d.$

\begin{theorem}[lower bound for the supervised label shift problem]
\label{th:lower-bound}
 There is $ c>0 $ independent of   $ n_P, n_Q $ and $ \eps_P $ such that 
\[ \inf_{\mathcal A : \samplespacelabeled \to  \cH}\left\{ \sup_{(P,Q)\in \cP} \Ex_{\datalabeled} \Big[\cE_Q(\cA (\datalabeled)) \Big]\right\}  \ge c\left( (n_P\eps_P + n_Q)^{-\frac{\beta}{2\beta+d}} + n_Q^{-1/2} \right)^{1+\alpha}. \] 
\end{theorem}

To show that the preceding lower bound is sharp, we consider a simple plug-in classifier whose convergence rate matches the lower bound:
\begin{equation}
\label{eq:plug-in-classfier}
    \hat{f}(x) \triangleq \begin{cases}1 & \text{if $\hat{\eta}_Q(x) \ge \frac12$,} \\
0 & \text{otherwise,}\end{cases}
\end{equation}
where $\hat{\eta}_Q$ is a particular estimator of the regression function. This estimator is constructed by plugging in an estimator of $\pi_Q$ and kernel-based estimators of $g_0$ and $g_1$ in \eqref{eq:regression-function}:
\begin{equation}
\label{eq:labeled-regfn}
\hat \eta _Q(x) = \frac{\hat \pi_Q \hat g_1(x)}{\hat \pi_Q \hat g_1 (x) + (1-\hat \pi_Q) \hat g_0(x)}.
\end{equation}
Here $\hat\pi_Q$ is the fraction of samples from the target domain with label 1, and $\hat g_0$ and $\hat g_1$ are kernel-based density estimators:
\begin{equation}
\hat g_y(x)  = \frac1{n_y} \sum_{x' \in \cX_y}\frac1 {h_y^d} K\Big(\frac{x-x'}{h_y}\Big), \quad y\in\{0,1\},
\label{eq:density-estimate}
\end{equation}
where $ \cX_y = \{ x: (x,y')\in \datalabeled,\ y'  =y \} $, $ n_y = |\cX_y| $, and $h_y > 0$ is a bandwidth parameter. 
The kernel $K$ in \eqref{eq:density-estimate} is a $\beta^*$-valid kernel \cite[\S 1.2]{tsybakov2009Introduction} for some $\beta^* \ge \beta$. Recall $K$ is a $\beta^*$-valid kernel iff
\begin{equation}
\int K(x)dx = 1,\quad\int x^lK(x)dx = 0\text{ for all }l\in[\beta^*].
\label{eq:valid-kernel}
\end{equation}
This choice of kernel is motivated by the H\"older smoothness assumption on $g_0$ and $g_1$. We note that \eqref{eq:density-estimate} uses samples from both source and target domains to estimate $g_0$ and $g_1$; this does not cause bias in $\hat g_0$ and $\hat g_1$ because in the label shift problem, the class conditionals are identical in the source and target domains. 

\begin{theorem}[upper bound for the supervised label shift problem]
\label{th:upper-bound}
Let $ \hat f $ be the plug in classifier defined above with bandwidths $ h_y =  n_y^{-1/(2\beta + d)}$, $y\in\{0,1\}$, where $n_y$ is the total number of samples in $\cD_L$ that has label $y$.   There is $C > 0$ independent of $n_P, n_Q$ and $\eps_P$ such that  \[ \sup_{(P,Q)\in \cP} \Ex_{\datalabeled} \left[\cE_Q(\hat f)\right] \le C \left( (\eps_P n_P + n_Q )^{-\frac{\beta}{2\beta+d}} + n_Q^{-1/2} \right)^{1+\alpha}. \] 
\end{theorem}


\begin{remark}
The bandwidths $ h_0 $ and $h_1$ and $\beta^*$ (in the choice of kernel) in Theorem \ref{th:lower-bound} depend on the smoothness parameter $ \beta$. In practice, $ \beta $ is usually unknown, so $h_0, h_1$ and $\beta^*$ are chosen by cross-validation. 
\end{remark}

We defer the proofs of Theorems \ref{th:lower-bound} and \ref{th:upper-bound} to the supplement. Together, the two theorems imply the minimax rate of the excess risk is: 
\begin{equation}
\inf_{\cA : \samplespacelabeled \to  \cH} \left\{ \sup_{(P,Q)\in \cP} \Ex_{\datalabeled} \Big[\cE_Q(\cA (\datalabeled)) \Big] \right\}  \asymp\left( (\eps_P n_P + n_Q )^{-\frac{\beta}{2\beta+d}} + n_Q^{-1/2} \right)^{1+\alpha} .
\label{eq:minimax-rate}
\end{equation}
The minimax rate shows the benefits of transfer learning, especially when $n_P \gg n_Q$. In the IID setting in which the learner has access to samples from the target domain but not the source domain, the minimax rate simplifies to \[ \inf_{\cA : \samplespacelabeled \to  \cH} \left\{ \sup_{(P,Q)\in \cP} \Ex_{\cD\sim Q^{\otimes n_Q} } \left[\cE_Q(\cA(\cD))\right] \right\}  \asymp n_Q^{-\frac{\beta (1+\alpha)}{2\beta + d}} \] (recall $\beta/(2\beta + d) < 1/2$). This agrees with known results on the hardness of non-parametric classification in IID settings \citep{audibert2007fast}. This is also the minimax rate of learners who ignore the data from the source domain, To see the benefits of transferring knowledge from the source domain, let $n_P\gg n_Q$. As long as $\eps_P n_P < n_Q^{1 + \frac{d}{2\beta}} - n_Q$ ($\frac{d}{2\beta}$ can be large, so this does not conflict with $n_P\gg n_Q$), the minimax rate simplifies to $(\eps_P n_P + n_Q )^{-{\beta(1+\alpha)}/{(2\beta+d)}}$, which is faster than the minimax rate of learners who ignores the data from the source domain. We wrap up this section with some technical remarks about the minimax rate in \eqref{eq:minimax-rate}.

\begin{remark}
The first term in \eqref{eq:minimax-rate} depends on the hardness of estimating the class conditional densities $g_0$ and $g_1$. This term depends on the \emph{total} sample size $\eps_P n_P + n_Q$ from the source and target domains because samples from both domains are  equally informative in estimating $g_0$ and $g_1$. The astute reader may wonder why $\eps_P$ affects the total sample size but $\pi_Q$ does not. This is because the hardest problem instance has $\pi_P$ close to zero and $\pi_Q = \frac12$. Thus class imbalance in the target domain does not affect the minimax rate, while that in the source domain heavily does. An intuitive explanation for  $\pi_Q = \frac12$ being the hardest case of classification can be understood from the convergence rate of $\hat{\eta}_Q$:
\[
 r(n, \pi)= \left\{ \begin{aligned}
& \sqrt{\frac{(1-\pi_Q)\pi_Q}{n_Q}} +  \sqrt{\pi_Q} \Big( \pi_P n_P + \pi_Q n_Q \Big)^{- \frac{\beta}{2\beta + d}} \\ & +  \sqrt{ (1-\pi_Q)} \Big( (1-\pi_P) n_P + (1-\pi_Q) n_Q \Big)^{- \frac{\beta}{2\beta + d}} 
\end{aligned}\right..
\]
Inspecting $r(n, \pi)$ as a function of $\pi_Q$ one can see that the function achieves maximum rate when $\pi_Q$ is close to $\frac12$. We refer to the proof of the upper bound in Appendix \ref{proof:upper-bound-labeled} (especially the discussion around \eqref{eq:reg-fn-rate-labeled}) for more details.
\end{remark}

\begin{remark}
The exponent of $n_P\eps_P + n_Q $ in \eqref{eq:minimax-rate} depends on the smoothness of $g_0$ and $g_1$; similar exponents arise in the minimax rates of density estimation \citep{tsybakov2009Introduction} and density ratio estimation \citep{kpotufe2017Lipschitz}. The second term in the minimax rate depends on the hardness of estimating  $\pi_Q$. Finally, the overall exponent on the outside depends on the noise level, which we measure with the parameters of the margin condition. We wrap up a few additional remarks about the minimax rate in the supervised label shift problem.
\end{remark}

\begin{remark}
If the learner only has access to the labels, but the access  to the features from the target domain are restricted, then it is possible to adapt the proofs of Theorems \ref{th:upper-bound} and \ref{th:lower-bound} to show that the minimax rate is \[ \inf_{\cA : \samplespacelabeled \to  \cH}\left\{ \sup_{(P,Q)\in \Pi} \Ex \left[\cE_Q(\hat f)\right]  \right\} \asymp \left( (\eps_P n_P  )^{-\frac{\beta}{2\beta+d}} + n_Q^{-1/2} \right)^{1+\alpha}. \] 
At a high-level, the absence of features from $Q$ prevents us from using the samples from $Q$ for estimating the class conditional densities (but they are still useful for estimating $\pi_Q$). 
\end{remark}

\begin{remark}
Unlike similar results on the fundamental limits of other transfer learning settings (\eg\ covariate shift \citep{kpotufe2018Marginal}, posterior drift \citep{cai2019Transfer}, \etc), in the label shift problem, there is a sharp jump between the $\pi_P = \pi_Q$ ($P=Q$) regime and the $\pi_P\ne\pi_Q$ ($P\ne Q$) regime. This is most clearly seen by considering the case in which $P$ is known and the only unknown quantity is $\pi_Q$, which simplifies the problem to a Bernoulli proportion estimation problem. In general, it is not possible to estimate $\pi_Q$ at a rate faster than $\frac{1}{\sqrt{n_Q}}$ unless we know $\gamma \triangleq |\pi_P - \pi_Q| \sim o(\frac{1}{\sqrt{n_Q}})$. In this case, (because we know $P$ and hence we also know $\pi_P$), we can simply estimate $\pi_Q$ with $\widehat{\pi}_Q = \pi_P$, and the worst-case risk of this estimator (over the problem class in which $\gamma \sim o(\frac{1}{\sqrt{n_Q}})$) is $o(\frac{1}{\sqrt{n_Q}})$. Consider the other problem class in which $\gamma\sim\Omega(\frac{1}{\sqrt{n_Q}})$. In this case, Le Cam's method shows that the minimax rate is at least $\frac{1}{\sqrt{n_Q}}$ (the two point construction uses two Bernoulli distributions whose means are $\Theta(\frac{1}{\sqrt{n}})$ apart). 

Taking a step back to the full problem (in which $P$ is unknown), the dependence of the minimax rate on $\gamma \triangleq |\pi_P-\pi_Q|$ is (by analogy to the simple case in which $P$ is known)
\[
\begin{cases}
\frac{1}{\sqrt{n_Q}} + \big( \eps_P n_P +  n_Q \big)^{-\frac{\beta}{2\beta + d}}, & \gamma \sim \Omega(\frac{1}{\sqrt{n_Q}}), \\
\max\{\gamma,\frac{1}{\sqrt{n_P}}\} + \big( \eps_P n_P +  n_Q \big)^{-\frac{\beta}{2\beta + d}}, & \gamma \sim o(\frac{1}{\sqrt{n_Q}}).
\end{cases}
\]
We see that as soon as $\gamma\sim\Omega(\frac{1}{\sqrt{n_Q}})$, the minimax rate no longer depends on $\gamma$. However, the problem sub-class in which $\gamma$ is small  consists of source and target distributions that are very similar, so this sub-class is uninteresting from a transfer learning perspective.

In similar studies of minimax rates for covariate shift \citep{kpotufe2018Marginal} and posterior drift \citep{cai2019Transfer}, the minimax rate depends smoothly on a parameter $\gamma$ that quantifies the allowable difference between the source and target distributions in the problem class, but this does not occur in the minimax rate for the label shift problem. The Bayes decision rule is \emph{different} in the source and target domains in the label shift setting, so optimal prediction (in the target domain) entails estimating an \emph{additional} correction term to bridge the gap between the source and target domains. Estimating this correction term basically boils down to estimating $\pi_Q$, and the hardness of estimating $\pi_Q$ does not depend on the difference between the source and target distributions measured in terms of $\gamma \triangleq |\pi_P-\pi_Q|$ (except in a very small problem sub-class in which transfer learning is irrelevant). Thus our minimax rate for the label shift problem does not depend on $\gamma$.

\end{remark}

\section{Unsupervised label shift}
\label{sec:unlabeled-target-data}
In the unsupervised version of the label shift problem, the learner has access to $ \dataunlabeled $, which consists of  $ n_P $ labeled samples from source domain and $ n_Q $ unlabeled samples from the target domain. For this version of the label shift problem, we impose an extra separation condition between the class conditional distributions. As we shall see, a preliminary step in solving the unsupervised label shift problem is estimating $\pi_Q = Q(Y=1)$, and separation between the class conditionals is necessary for its accurate estimation. We consider the $L_2$-distance $D$ between probability densities:
\begin{equation}
D^2(g_0, g_1) \triangleq \int_{\cX}  \big(g_1(x) - g_0(x)\big) ^2dx\,.
\label{eq:kernel-distance}
\end{equation} For some $0<C <1$ we assume 
$ g_0 $ and $ g_1 $ satisfy $D(g_0,g_1) \ge C$.
In the rest of this section, we work with the following class of source and target distribution pairs $(P,Q)$: \[ \cP' \triangleq \left\{ (P, Q) \in \cP: D(g_0,g_1) \ge C \right\} \]

First, we present a lower bound for the convergence rate of the excess risk in the unsupervised label shift problem. 
The lower bound is valid for any learning algorithm $\cA:\samplespaceunlabeled \to \cH$, where $\samplespaceunlabeled \triangleq (\cX \times \cY)^{n_P}\times \cX^{n_Q}$ is the space of possible datasets in the unsupervised label shift problem (the subscript `U' indicates samples from the target domain are unlabeled) and $\cH \triangleq \{ h : [0,1]^d \to\{0,1\} \}$ is the set of classifiers on $[0,1]^d.$


\begin{theorem}[lower bound for the unsupervised label shift problem]
\label{th:lower-bound-unlabeled}
There is $c > 0$ independent of $n_P, n_Q$ and $\eps_P$ such that  \[ \inf_{\cA :  \samplespaceunlabeled \to \cH }\left\{ \sup_{(P,Q)\in \cP'} \Ex_{\dataunlabeled} \left[\cE_Q\left( \cA \left( \dataunlabeled  \right)\right)\right]\right\}   \ge c \Big( (\eps_Pn_P)^{-\frac{\beta}{2\beta+d}} + n_Q^{-1/2} \Big)^{1+\alpha}. \] 
\end{theorem}

To show that the preceding lower bound is sharp, we design a classifier whose rate of convergence matches the lower bound. As we alluded to earlier, a preliminary step is estimating $\pi_Q$. This is known as the class proportion estimation problem, and it is challenging because the samples from the target domain are unlabeled (in the unsupervised label shift problem). There are many ways to solve the class proportion estimation problem \citep{lipton2018Detecting,azizzadenesheli2019Regularized, alexandari2020EM, du2014semi, iyer2014Maximum, jain2016Estimating}. To prove a matching upper bound, we appeal to the method of \citet{iyer2014Maximum}, but it is possible to show similar upper bounds with other methods (see Remark \ref{rem:alg-class-proportion-estimation}).
Armed with an estimate of $\pi_Q$, we consider the same plug-in classifer as \eqref{eq:labeled-regfn}, except that the kernel-based estimators of $g_0$ and $g_1$ only depend on the labeled samples from the source domain. 

\begin{theorem}[upper bound for the unsupervised label shift]
\label{th:upper-bound-unlabeled}
There is $C > 0$ independent of $n_P, n_Q$ and $\eps_P$ such that \[ \sup_{(P,Q)\in \cP'} \Ex_{\dataunlabeled} \left[\cE_Q \left( \hat f
\right)\right] \le C \left( (\eps_P n_P)^{-\frac{\beta}{2\beta+d}} + n_Q^{-1/2} \right)^{1+\alpha}. \]
\end{theorem}
We defer the proofs of Theorems \ref{th:lower-bound-unlabeled} and  \ref{th:upper-bound-unlabeled} to Section \ref{sec:lb-proof} and Appendix \ref{sec:ub-proofs}. Theorems \ref{th:lower-bound-unlabeled} and  \ref{th:upper-bound-unlabeled} together show that the minimax convergence rate of the excess risk in the unsupervised version of the label shift problem is  
\begin{equation}
\inf_{\cA :  \samplespaceunlabeled \to \cH}\left\{ \sup_{(P,Q)\in \cP'} \Ex_{\dataunlabeled} \left[\cE_Q\left( \cA \left( \dataunlabeled  \right)\right)\right]\right\}   \asymp \left( (\eps_P n_P)^{-\frac{\beta}{2\beta+d}} + n_Q^{-1/2} \right)^{1+\alpha}. 
\label{eq:minimax-rate-unsupervised-label-shift}
\end{equation} 


Recall the minimax rate of IID non-parametric classification in the target domain is $n_Q^{-\frac{\beta(1+\alpha)}{2\beta + d}}$ \citep{audibert2007fast}. Comparing the minimax rates of IID non-parametric classification (in the target domain) and the unsupervised label shift problem, we see that as long as there are enough samples in the target domain (so we are in the regime of the preceding remark), then labeled samples from the source domain are as informative as labeled samples from the target domain. An extremely important practical implication of this observation is if labeled examples are hard to obtain in the target domain, then it is possible to substitute them with labeled examples in a label shifted source domain. 

Before moving on, we compare the minimax rates in the supervised \eqref{eq:minimax-rate} and unsupervised \eqref{eq:minimax-rate-unsupervised-label-shift} label shift problems. We see that the rates differ in the first term in the parentheses: there is $\eps_Pn_P$ instead of $\eps_Pn_P + n_Q$ in the minimax rate of the unsupervised version. Recall this term depends on the hardness of estimating the class conditional densities $g_0$ and $g_1$. In the supervised version, the samples from the target domain are labeled, so they can be directly used to estimate $g_0$ and $g_1$. In the unsupervised version, the samples from the target domain are unlabeled, so there is no direct way to use them to estimate $g_0$ and $g_1$. There may be indirect ways to leverage the samples from the target domain (\eg\ by imputing their labels), but our results show that such tricks cannot improve the convergence \emph{rate} of the classifier. 
We wrap up with a few additional remarks about the minimax rate in the unsupervised label shift problem. 

\begin{remark}
If $ n_P \gg n_Q$, then the minimax rate simplifies to 
\[ \inf_{\cA } \sup_{(P,Q)\in\cP'} \Ex_{\dataunlabeled} \left[\cE_Q\left( \cA \left( \dataunlabeled  \right)\right)\right]   \asymp \begin{cases}
(\eps_Pn_P)^{-\frac{\beta(1+\alpha)}{2\beta + d}} & \text{if } \eps_P n_P \ll n_Q^{1+\frac d{2\beta}},\\
n_Q ^{-\frac{1+\alpha}{2}} & \text{if } \eps_P n_P \gg n_Q^{1+\frac d{2\beta}}.
\end{cases} \]
Recalling the form of the plug-in classifier, we see that there are  two main sources of errors that contribute to the excess risk: 
\begin{enumerate}
\item error in the estimation of the class probability $\pi_Q$. This leads to the $\cO(n_Q ^{-\frac{1+\alpha}{2}})$ term in the minimax rate.
\item error in the estimation of the class conditional densities $ g_0 $ and $ g_1$. This leads to the $\cO((\eps_Pn_P)^{-\frac{\beta(1+\alpha)}{2\beta + d}})$ term in the minimax rate.
\end{enumerate}
If $ \eps_P n_P \gg n_Q^{1+\frac d{2\beta}} $ then the error in estimation of $ \pi_Q$  dominates the excess risk. In this case, improving the estimates of the class conditional densities (\eg\ by increasing $ n_P $) does not improve the overall convergence rate.
\end{remark}
\begin{remark}
If $ \eps_P n_P \ll n_Q^{1+\frac d{2\beta}}, $ then the minimax rate simplifies to \[ \inf_{\cA :  \samplespaceunlabeled \to \cH} \left\{ \sup_{(P,Q)\in \cP'} \Ex_{\dataunlabeled} \left[\cE_Q\left( \cA \left( \dataunlabeled  \right)\right)\right] \right\}  \asymp (\eps_P n_P)^{-\frac{\beta(1+\alpha)}{2\beta + d}}, \]
which is the minimax rate of IID non-parametric classification in the source domain. In other words, given enough unlabeled samples from the target distribution, the error in the non-parametric parts of the unsupervised label shift problem dominate. As this is also the essential difficulty in the IID classification problem in the source domain, it is unsurprising that the minimax rates coincide.
\end{remark}

\begin{remark}
\label{rem:alg-class-proportion-estimation}
Reviewing the methods for class proportion estimation shows that most methods converge at a $n_Q^{-1/2} + \delta(n_P)$-rate uniformly on $\cP'$, where $\delta(n_P)$ is a term that captures the dependence of the rate on $n_P$. Inspecting the proof of Theorem \ref{th:upper-bound-unlabeled} reveals that as long as 
$\delta(n_P) \lesssim (\eps_Pn_P)^{-\beta/(2\beta+d)}$ (which is satisfied by the method of \citet{iyer2014Maximum}), the excess risk of the resulting classifier attains the minimax rate \eqref{eq:minimax-rate-unsupervised-label-shift}. 
Thus, it is possible to prove similar upper bounds with other methods for class proportion estimation as well \citep{lipton2018Detecting,azizzadenesheli2019Regularized, alexandari2020EM, du2014semi, jain2016Estimating}. 
\end{remark}

\section{Simulations}
\label{sec:simulations}

In this section, we present simulations that illustrate the effects of class imbalance in the source domain. The labels in the source domain are distributed as $Y_i \sim \Ber(\pi_P)$, and the labels in the target domain are distributed as  $Y_i\sim\Ber(0.75)$. The class conditional distributions (in both source and target domains) are
\[\textstyle
X_i\mid Y_i \sim Y_i * \text{TN}(0,1, -2, 2)^{\otimes 3} + (1-Y_i) * \text{TN}\left(2,1, 0, 4\right)^{\otimes 3},
\]
where $\TN(\mu, \sigma^2, a,b) $ is the $N(\mu,\sigma^2)$ distribution truncated to the interval $[a,b]$. We consider three class imbalance settings: $\pi_P = 0.5$ (solid line, solid circle as pointer), $\pi_P \sim 1/\sqrt{n_P}$ (dashed line, star as pointer) and $\pi_P \sim 1/n_P$ (dotted line, plus as pointer).
We defer other details of the simulation setup to Appendix A. 

\paragraph{Supervised label shift simulations} In the supervised setting, we compare the excess risks of the two following methods:
\begin{enumerate}
\item a minimax rate optimal plug-in classifier $ \indicator\left\{ \hat \eta_Q(x) \ge \frac12  \right\}$,  where $\hat \eta_Q$ is defined in \eqref{eq:labeled-regfn}. This method is denoted as \texttt{Cl-labeled}.
\item  a classical classifier designed for IID settings. This is the same classifier as the minimax rate optimal plug-in classifier that only uses data from the \emph{target} domain to estimate the class conditional densities and class probabilities. We considered this classifier in Section \ref{sec:labeled-target} when discussing the benefits of transfer learning. This method is denoted as \texttt{Cl-classical}.
\end{enumerate}
To estimate the class conditional densities, we use a $2$-valid kernel (see \eqref{eq:valid-kernel}) with the optimal bandwidth $h_0 ={n_0^{-1/7}} $ and $ h_1 = {n_1^{-1/7}}$ (see Theorem \ref{th:upper-bound}). 


We compare the efficacy of the two approaches in two settings.
In the first setting (Figure \ref{fig:supervised}, left), $n_P \gg n_Q$ ($n_P$ is growing and $n_Q$ is held fixed), so it is necessary to transfer knowledge from source to target data to reduce the excess risk. 
As expected, the excess risk of the classical classifier designed for IID settings remains constant as $n_P$ changes as it ignores data from the source domain. On the other hand, the excess risk of the \texttt{Cl-labeled} classifier, which transfers knowledge from source to target domain, decreases as $n_P$ increases in the $\pi_P = 0.5$ and $\pi_P \sim \frac1 {\sqrt{n_P}}$ settings. This shows that \texttt{Cl-labeled} leverages data from the source domain to improve classification accuracy in the target domain. We note that effect of class imbalance in the source domain appears in the $\eps_Pn_P$ term in the minimax rate \eqref{eq:minimax-rate}. Recall this is the minimum expected per class sample size  in the source domain. From \eqref{eq:minimax-rate}, we expect the convergence rate of the excess risk is slower in the $\pi_P \sim \frac1 {\sqrt{n_P}}$ setting than in the $\pi_P = 0.5$ setting, and we see that this is indeed the case in Figure \ref{fig:supervised}. We also expect the excess risk to remain bounded away from zero in the $\pi_P \sim \frac 1{n_P}$ setting because the $\eps_Pn_P$ term remains bounded away from zero in the minimax rate.

In the second setting (Figure \ref{fig:supervised}, right), we compare the classical classifier and \texttt{Cl-labeled} in the $n_Q \gg n_P$ ($n_Q$ is growing and $n_P$ is held fixed) setting. This is an easier setting because it is possible to perform well (have small excess risk) in the target domain without transferring knowledge from the source domain. We expect both approaches to perform well in this setting, and Figure \ref{fig:supervised} confirms this.


\begin{figure}
\centering
\includegraphics[height=5cm,width=12cm]{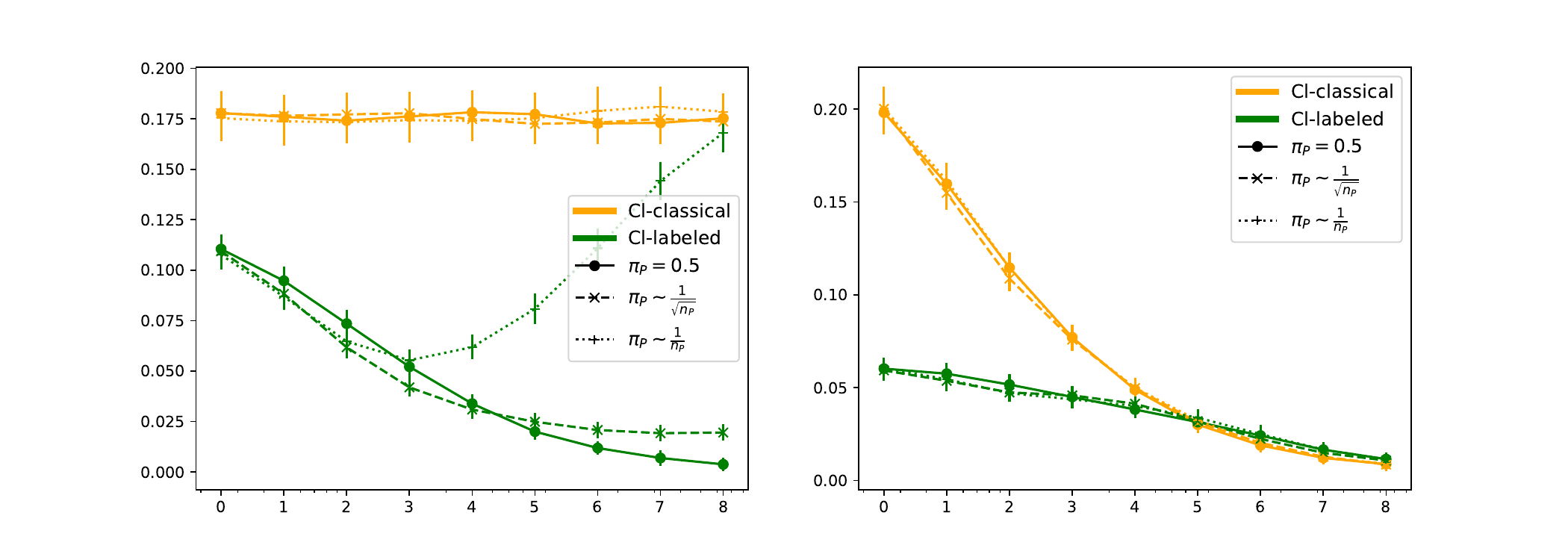} 

\hspace{-8em} 

\begin{picture}(0,18)(0,0)
\put(-155, 90){\rotatebox{90}{  $\widehat \cE (f)$}}
\put(-125, 25){$k,\ n_P = 25 \times 2^k;$ \textcolor{blue}{$n_Q = 40$}}
\put(25, 25){$k,\ n_Q = 25 \times 2^k;$ \textcolor{blue}{$n_P = 160$}}
\end{picture}
\caption{Excess risks in the supervised label shift problem. We see that the minimax optimal approach studied in section \ref{sec:labeled-target} is the only approach that learns effectively from both source and target domains.}
\label{fig:supervised}
\end{figure}


\paragraph{Unsupervised label shift simulations} In the unsupervised setting, we compare the excess risks of the two following methods:
\begin{enumerate}
    \item a minimax rate optimal plug-in classifier where $\hat\eta_Q$ is estimated using kernel-based density estimates of $g_1$ and $g_0$ and a distribution matching estimator of $\pi_Q$ \citet{lipton2018Detecting}. This method is denoted as \texttt{Cl-unlabeled}. We use a different method here to estimate $\pi_Q$ than in the proof of Theorem \ref{th:upper-bound-unlabeled} to demonstrate the robustness of our results to the choice of estimator (of $\pi_Q$). 
    \item an oracle plugin classifier that uses the exact value of $\pi_Q$. We denote this classifier by \texttt{Cl-oracle}. 
\end{enumerate}
To estimate the class conditional densities, we use a $3$-valid kernel (see \eqref{eq:valid-kernel}) with the optimal bandwidths $h_0 = {\big(n_0'\big)^{-1/7}},\ h_1 =  {\big(n_1'\big)^{-1/7}}$ (see theorem \ref{th:upper-bound-unlabeled}). We defer the other details of the simulation setup to Appendix A. 

We evaluate the efficacy of both approaches in the two preceding settings, $n_P \gg n_Q$ (see Figure \ref{fig:unsupervised}, left) and $n_Q \gg n_P$ (see Figure \ref{fig:unsupervised}, right). In both settings, we fix $\pi_Q = 0.75$ and vary $\pi_P$ to study the effects of class imbalance. 
At a high-level, as long as $n_Q$ is large enough, the distributional matching classifier matches the performance of the oracle classifier. This is explained by the fact that if $n_Q$ is large enough, the distributional matching produces a good enough estimator of $\pi_Q$, so the error in the estimation of $\pi_Q$ is no longer the dominant source of error in the excess risk. 

We observe that there is always a small gap between the excess risk of the two methods because the estimates of the class probability ratios are not consistent in the simulation settings. Recall that the error incurred by distributional matching is $O(\frac{1}{n_P}) \vee O(\frac{1}{n_Q})$ (see \cite{lipton2018Detecting}, Theorem 3). This implies that the estimates of the class probability ratios never converge to their population counterparts in the simulation settings, because one of $n_P$ and $n_Q$ is held fixed by design. Thus, the oracle classifier, which does not suffer from errors in the estimates of the class probability ratios, is always a step ahead of the minimax rate optimal classifier.

\begin{figure}
\centering
\includegraphics[height=5cm,width=12cm]{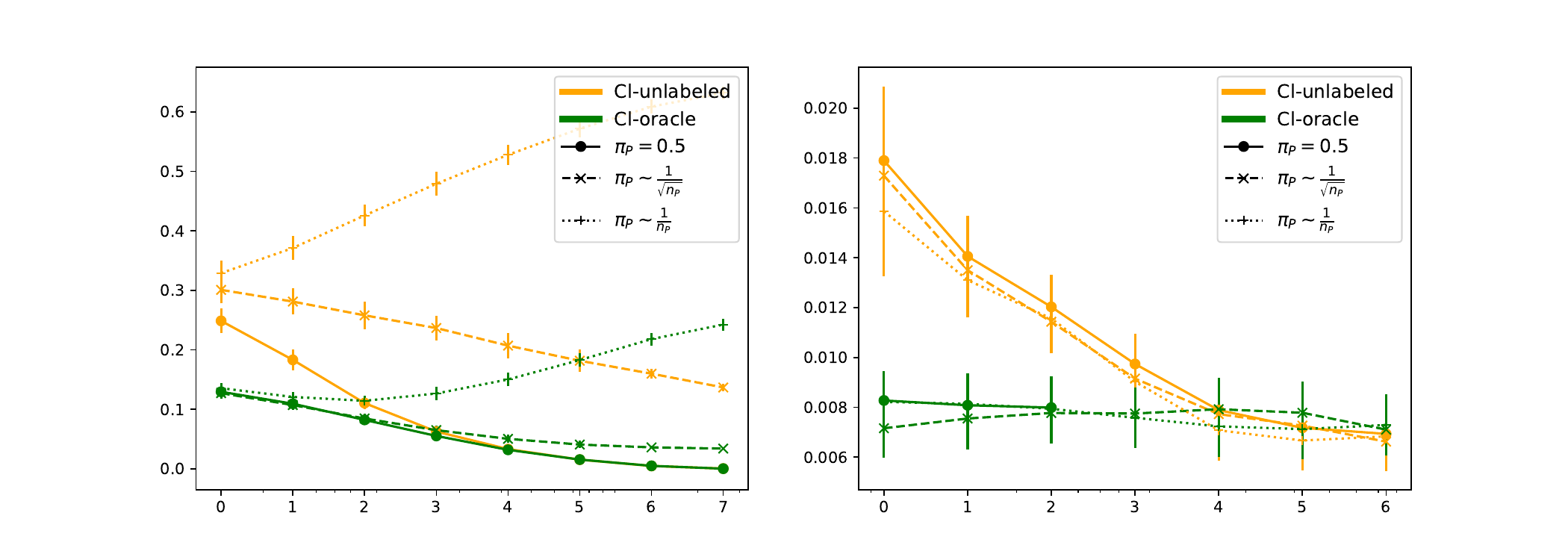} 
\hspace{-8em} 
\begin{picture}(0,18)(0,0)
\put(-255, 60){\rotatebox{90}{  $\widehat \cE (f)$}}
\put(-225, 0){$k,\ n_P = 20 \times 2^k;$ \textcolor{blue}{$n_Q = 100$}}
\put(-75, 0){$k,\ n_Q = 10 \times 2^k;$ \textcolor{blue}{$n_P = 1000$}}
\end{picture}
\caption{Excess risks in the unsupervised label shift problem. We see that the minimax optimal approach studied in section \ref{sec:unlabeled-target-data} (eventually) matches the performance of the oracle classifier.}
\label{fig:unsupervised}
\end{figure}

\section{Proof of Theorem \ref{th:lower-bound-unlabeled}}
\label{sec:lb-proof}

In this section, we prove Theorem \ref{th:lower-bound-unlabeled}. The proof of Theorem \ref{th:lower-bound} is similar, and we defer the details to the supplementary materials. At a high-level, the proof has two parts. The first part shows that the minimax rate is at least $(\eps_P n_P)^{-\frac{\beta(1+\alpha)}{2\beta + d}}$. This is due to the difficulty arising from  the non-parametric part of the label shift problem: estimating the class conditional densities $g_0$ and $g_1$. The second part shows that the minimax rate is at least $n_Q^{- \frac{1+\alpha}{2}} $. This stems from  the difficulty of the parametric part of the label shift problem: estimating the class probabilities in target domain $\pi_Q$. 

\subsection{Difficulty of the non-parametric part} To study the difficulty of the non-parametric part of the label shift problem, we appeal to the following proposition  to obtain lower bounds in the non-parametric regression problems. 

\begin{proposition}[Theorem 2.5 in  \cite{tsybakov2009Introduction}] 
\label{prop:tsybakov}
Let $ \{ \Pi_h\}_{h\in \cH} $ be a family of distributions indexed over a subset $ \cH $ of a semi-metric $ (\cF,\bar \rho). $ Suppose there are $h_0, \ldots h_M\in \cH , $ $ M\ge 2, $ such that:
\begin{enumerate}
\item $ \bar\rho (h_i , h_j) \ge 2s>0$ for all $0 \le i<j \le M, $
\item $ \Pi_{h_i} \ll \Pi_{h_0} $ for all $ i\in [M] $, and the average KL-divergence to $ \Pi_{h_0} $ satisfies \[ \frac1M \sum_{i=1}^M\KL(\Pi_{h_i}\mid\Pi_{h_0}) \le \kappa \log M\text{ for some }0<\kappa < \frac18. \]
\end{enumerate}
Let $ Z\sim \Pi_h$ and $ \hat f : Z \to \cF $ be any improper learner of $ h \in \cH. $ We have for any $ \hat f : $ \[ \sup_{h\in\cH}  \Pi_h\left( \bar\rho (\hat f (Z),h)  \ge s \right)  \ge \frac {3-2\sqrt{2}}{8}.  \]
\end{proposition}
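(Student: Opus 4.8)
The plan is to run the classical reduction from estimation to multiple hypothesis testing (the Fano / Tsybakov method), which is exactly the content of Theorem~2.5 of \citet{tsybakov2009Introduction}. First I would convert the improper learner $\hat f$ into a test: for a realization $Z\sim\Pi_h$, let $\psi(Z)\in\{0,1,\ldots,M\}$ be an index minimizing $\bar\rho(\hat f(Z),h_j)$ over $j$ (break ties by the smallest index). If, for the true index $j$, the test errs, i.e. $\psi(Z)=k\neq j$, then minimality gives $\bar\rho(\hat f(Z),h_k)\le\bar\rho(\hat f(Z),h_j)$, and combining this with hypothesis~(1), $\bar\rho(h_j,h_k)\ge 2s$, and the triangle inequality for the semi-metric $\bar\rho$, we get $2s\le\bar\rho(h_j,h_k)\le\bar\rho(\hat f(Z),h_j)+\bar\rho(\hat f(Z),h_k)\le 2\bar\rho(\hat f(Z),h_j)$, hence $\bar\rho(\hat f(Z),h_j)\ge s$. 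Therefore $\Pi_{h_j}\bigl(\bar\rho(\hat f(Z),h_j)\ge s\bigr)\ge\Pi_{h_j}(\psi(Z)\neq j)$ for every $j$, so it suffices to prove that the minimax testing error $p_{e,M}\triangleq\inf_{\psi}\max_{0\le j\le M}\Pi_{h_j}(\psi\neq j)$ is at least $\tfrac{3-2\sqrt2}{8}$, uniformly over $M\ge 2$ and over all families meeting hypothesis~(2).

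Next I would lower bound $p_{e,M}$ using the information content of $\frac1M\sum_{j=1}^M D(\Pi_{h_j}\|\Pi_{h_0})\le\kappa\log M$ with $\kappa<\tfrac18$. Passing from the worst case to a uniform prior on $\{h_0,\ldots,h_M\}$ reduces matters to bounding the Bayes error, and for this I would use a likelihood-ratio test: with $L_j\triangleq d\Pi_{h_j}/d\Pi_{h_0}$, accept $h_j$ only when $L_j$ exceeds a threshold $\tau$ (and declare $h_0$ otherwise). The two error contributions are $\Pi_{h_0}(\max_j L_j>\tau)$, controlled by Markov's inequality since $\Ex_{\Pi_{h_0}}L_j=1$, and $\Pi_{h_j}(L_j\le\tau)$, controlled by writing $D(\Pi_{h_j}\|\Pi_{h_0})=\Ex_{\Pi_{h_j}}[\log L_j]$ and applying Markov to $\log(1/L_j)$ together with the budget $\kappa<\tfrac18$. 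Optimizing the free threshold $\tau$ and using $\log M\ge\log 2$ then yields a universal positive constant, which the bookkeeping in \citet{tsybakov2009Introduction} pins down as $\tfrac{3-2\sqrt2}{8}$.

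The step I expect to be the main obstacle is precisely this last quantitative one. The crude Fano inequality $p_{e,M}\ge 1-(\kappa\log M+\log 2)/\log M$ becomes vacuous for small $M$ (already at $M=2$ it is negative), so one cannot simply invoke it; the refined likelihood-ratio argument, with its careful balancing of the two tail probabilities against the budget $\kappa<\tfrac18$ and the optimization over $\tau$, is what keeps the bound bounded away from zero for every $M\ge2$. Since this is Theorem~2.5 of \citet{tsybakov2009Introduction}, the delicate bookkeeping can be imported verbatim; the only piece that must be supplied in context is the elementary reduction of the first paragraph, which uses nothing beyond the triangle inequality for $\bar\rho$.
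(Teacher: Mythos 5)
The paper does not give its own proof of this proposition: it is stated and used directly as Theorem 2.5 of \citet{tsybakov2009Introduction}, just as a quoted black box. Your sketch faithfully reproduces the strategy of Tsybakov's proof — the first paragraph correctly converts the learner into a minimum-distance test via the triangle inequality for $\bar\rho$ (giving $\Pi_{h_j}(\bar\rho(\hat f(Z),h_j)\ge s)\ge\Pi_{h_j}(\psi\neq j)$), and the second paragraph correctly identifies the refined likelihood-ratio/Markov argument, rather than crude Fano, as the source of the uniform-in-$M$ constant — and you explicitly defer that quantitative bookkeeping to the cited theorem, which is exactly what the paper does, so there is no gap.
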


The crux of this part  is  construction of a family of distributions on the source and target domains $\{\Pi_i\}_{i=1}^M$ that satisfies the assumptions of proposition \ref{prop:tsybakov}. Before delving into the technical details, we describe the intuition behind the construction. 

We devote most of our efforts in this part to constructing the class conditional densities because we wish to study (the difficulty of) the non-parametric part of the problem. We partition the sample space $[0,1]^d$ into small (hyper-)rectangles and divide the rectangles into three groups. The class conditional densities differ on the second and third groups, but they are identical on the first group. The sizes of the two groups are carefully adjusted to satisfy the margin condition in definition \ref{def:distribution-class} and minimize the KL divergence between the $\Pi_i$'s. Within each rectangle, the class conditional densities are smooth  functions with a rise/fall near center of the rectangle and half near the boundaries.


In the proof, we associate the rectangles with the vertices of a hypercube and judiciously pick a subset of rectangles on which the class conditional densities differ, so that the Hamming distance between the associated verticies of the hypercube is maximized. The lower bound on the minimax rate then follows directly from proposition \ref{prop:tsybakov}.

We assume $\eps_Pn_P \ge 1. $
Let  $r = c_r (\eps_Pn_P)^{-1/(2\beta + d)}, m = \lfloor c_m r ^{\alpha\beta - d}\rfloor,$ where $\alpha\ge 0 $ is the noise condition exponent and $\beta>0 $ is the H\"older smoothness exponent.
Also, define  $c_r = (1/17) ,\  c_m = 8 \times 17^{\alpha\beta - d}$ and $c_w \in (0, 1)$ is a constant to be picked later. 
The preceding constants satisfy \[ 8 \le m <\frac12 \left\lfloor \frac 1r \right\rfloor^d. \]  
To see the first inequality, we observe that $r \le 1/17$ and recall $\alpha\beta \le   d.$ 
Thus \[ m = 8 \times (17r)^{\alpha\beta - d} \ge 8. \] 
To see the second inequality, we observe that $r^{-1} \ge 16,$ which implies $r^{-1}\le 17\lfloor r^{-1}\rfloor/16.$ 
Thus \[ m = 8 (17r)^{\alpha\beta} \left(\frac 1{17r}\right)^d \le 8\cdot  16^{-d} \lfloor r^{-1} \rfloor ^d \le\frac1 2 \lfloor r^{-1} \rfloor ^d. \] We also have $2mw = 2mc_w r^d\le 2c_m c_w<1 $ for a suitable $c_w.$
    
    \textit{Construction of $\{\Pi_i\}_{i=1}^M$.}
    Let $r_1 = 1/\lfloor 1/(c_wr)\rfloor$ if $ \lfloor 1/(c_wr)\rfloor $ is even, otherwise let $ r_1 = 1/\left(\lfloor 1/(c_wr)\rfloor+1\right). $
      Let us consider the grid of points \begin{equation}
        \cZ = \{ (1/2+i)r_1 : i = 0, 1, \dots , 1/r_1 -1 \}^d.
    \end{equation} 
    We see that $\cZ$ is a grid of equally spaced points of size $r_1^{-d}.$ For a $z \in \cZ$ we consider the hyper-cube \[ C(z) = \{x \in [0,1]^d: \|x-z\|_\infty\le r_1/2 \}. \] Note that, volume of each of these hyper-cubes is $r_1^d.$ Let $\cZ_1, \cZ_2 \subset \cZ $ be subsets of size $m$. Moreover, we let $\cZ_1 $ and $\cZ_2$ are disjoint. We define a bijection $u : \cZ_1 \to \cZ_2 $ which shall be used to construct the conditional densities. We define $\cZ_0 = \cZ \backslash (\cZ_1 \cup \cZ_2).$  Note that, $ \cZ $ has even number of points and $ |\cZ_1| = |\cZ_2| .$ Hence,  $ \cZ_0 $ has even number of points. We further divide $ \cZ_0 $ in two sets $ \cZ_3, \cZ_4 $ of equal sizes.  We shall define a set of distributions parametrized by $\sigma\in \{ -1, 1 \}^{\cZ_1}.$ 

    \noindent\textit{Conditional densities.}  For $a >0$ we define a function $v_a$ supported on on $\reals$ which will be used heavily for the construction of conditional densities. 
 Define   
\[u_{a}(x)=     
\begin{cases}
0 & \text{ for }x<0\\
\frac{ \int_{0}^x e^{-\frac1{at(1-t)}}dt}{\int_{0}^1 e^{-\frac1{at(1-t)}}dt} & \text{ for }0 \le x \le 1\\
1 & \text{ for } x>1
\end{cases}\]
   and 
    \begin{equation}
    \label{eq:u-function-1}
       v_{a}(x)=     \begin{cases}    
    \big( 1-u_a(x) \big)^{1/\alpha}  & \quad \text{for }\beta < 1,\\
    \big( 1 -u_a(x) \big) & \quad \text{for }\beta \ge 1.\end{cases}
    \end{equation}

    According to lemma C.6 we choose $ a $ such that $ v_a \equiv v  $ is $ (\beta, C_\beta) $ H\"older smooth.  
    Therefore, the following functions are $(\beta, C_\beta)$-H\"older smooth: 
    \[ z \in \cZ, \hspace{0.3cm} \eta_z(x) =  \frac{r^\beta}3 v \left(\frac{2\|x-z\|_\infty }{r_1}\right) \]
 and     \[ z \in \cZ, \hspace{0.3cm} \xi_z(x) =   v \Bigg(\frac{2\|x-z\|_\infty }{br_1} - 2\left(\frac1b-1\right)\Bigg). \]
    For a parameter $\sigma$ the  construction of  conditional densities are given below.

    \begin{gather}
        \left\{    
        \begin{aligned}
         g_1^{\sigma}(x) & = \begin{cases}
    1+  \sigma(z) \sqrt{\eps_P} \eta_z(x) & \hspace{0.3cm} x \in C(z),\ z \in \cZ_1,\\
    1-  \sigma(z) \sqrt{\eps_P} \eta_{f(z)}(x) & \hspace{0.3cm} x \in C(f(z)),\  z \in \cZ_1,\\
    1 + \xi_z(x) & \hspace{0.3cm} x \in C(z),\  z \in \cZ_3,\\
    1 - \xi_z(x) & \hspace{0.3cm} x \in C(z),\  z \in \cZ_4,
    \end{cases} \\
    g_0^{\sigma}(x)&  = \begin{cases}
     1-  \sigma(z) \sqrt{\eps_P}  \eta_z(x) & \hspace{0.3cm} x \in C(z),\ z \in \cZ_1,\\
    1+  \sigma(z) \sqrt{\eps_P}  \eta_{f(z)}(x) & \hspace{0.3cm} x \in C(f(z)),\  z \in \cZ_1,\\
    1 - \xi_z(x) & \hspace{0.3cm} x \in C(z),\  z \in \cZ_3,\\
    1 + \xi_z(x) & \hspace{0.3cm} x \in C(z),\  z \in \cZ_4.\end{cases}
        \end{aligned}
        \right.
        \label{eq:density-class-unsupervised-density}
    \end{gather}

    We also define $\pi_Q^{\sigma} = 1/2$ and $\pi_P^\sigma = 1/2.$ We then define the probabilities \[P_{\sigma}(X \in A, Y= y) = \int_A [\pi_P^{\sigma} g^{\sigma}_1(x)\indicator(y = 1) + (1-\pi_P^{\sigma}) g^{\sigma}_0(x) \indicator (y = 0) ] dx\] and
    \begin{equation}
        Q_{\sigma}( X \in A, Y = y) = \int_A [\pi_Q^
    {\sigma}g^{\sigma}_1(x)\indicator(y = 1) + (1-\pi^{\sigma}_Q) g^{\sigma}_0(x) \indicator (y = 0) ] dx.
    \label{eq:dist-Q-unlabeled-density}
    \end{equation}
    Given the source and target distributions we define the joint distribution of $\dataunlabeled$ as \begin{equation}
        \label{eq:dist-unlabeled-joint-density}
        \Pi_\sigma = P_\sigma^{\otimes n_P} \otimes Q_{\sigma, X} ^{\otimes n_Q}
    \end{equation}
    
    Here, $\eps_P \le \pi_P^\sigma \le 1-\eps_P.$ Also, $q_X \equiv 1 $ for any $x \in \Omega.$ Hence, $\mu_- \le q^{\sigma}_X(x) \le  \mu _+ .$ Furthermore, $\Omega = [0,1]^d$ is a regular set. Hence, $q^\sigma _X$ satisfies strong density assumption.

    For such a construction, the marginals are \begin{equation}
        \label{eq:marginals-unlabeled-density}
        p_X^\sigma(x) = q_X^\sigma(x) = \frac 12 g_1^\sigma(x)+ \frac 12 g_0^\sigma(x) = 1
    \end{equation}
    
    Furthermore, the regression function $\eta_Q^\sigma$ is \begin{equation}
        \label{eq:reg-fn-unlabeled-density}
        \eta_Q^\sigma(x) = \frac{\pi_Q^\sigma g_1^\sigma(x)}{q_X^\sigma(x)} = \frac 12 g_1^\sigma(x)
    \end{equation}

   We refer to lemma \ref{lemma:margin-unlabeled-density}, where it is shown $Q_\sigma$ satisfies $\alpha$-margin condition with constant $C_\alpha.$ Also, the separation assumption \ref{assump:separation-conditionals} is verified in lemma \ref{lemma:verification-sepration-conditionals}.

    Let $ \cF $ be the set of all classifier relevant to this classification problem. For $\sigma \in \{-1, 1\}^{\cZ_1}$ let  $f_\sigma$ be the Bayes classifier corresponding to the probability distribution $Q_\sigma$ defined as $ f_\sigma (x) = \indicator\{\eta_{ Q_\sigma }(x) \ge 1/2\}.$  For $ \sigma,\sigma'\in\{-1, 1\}^{\cZ_1} $ define $ \bar\rho (\sigma,\sigma') \coloneqq \cE_{\sigma} (f_{\sigma'})   $ and  $\rho(\sigma,\sigma') = \text{card} \{z\in\cZ_1 :\sigma(z) \neq \sigma'(z) \} $ as the Hamming distance. Then \begin{align*}
        \bar\rho (\sigma,\sigma') & = 2\Ex _{Q_{\sigma, X}} \left[\left | \eta_Q^\sigma(X) - \frac12 \right | \mathbf{1}\left( f_\sigma(X) \neq f_{\sigma'}(X)  \right)\right]\\
         & \ge c_1   r_1^d r^\beta  \rho(\sigma, \sigma')\\
         & \ge c_1 c_w^d r^{\beta+d}\rho(\sigma, \sigma').
    \end{align*}
	
We recall Varshamov-Gilbert bound, which shall be used to construct the probability class. 	
	\begin{lemma}[Varshamov-Gilbert bound]
	\label{lemma:varshamov-gilbert}
	Let $ m\ge 8. $ Then there exists a subset $ \{ \sigma_0,\ldots , \sigma_M \} \subset \{-1,1\}^m $ such that $ \sigma_0 = (1 ,\ldots, 1), $ \[ \rho_H(\sigma_i , \sigma_j ) \ge \frac m 8, \ \text{for all } 0\le i<j\le M,\ \text{and } M \ge 2^{m/8}, \] where, $ \rho_H $ is the hamming distance. 
\end{lemma}

	Let $\{ \sigma_0,\ldots , \sigma_M \} \subset \{-1,1\}^m  $ be the choice obtained from the  lemma \ref{lemma:varshamov-gilbert}.  Note that for such a choice $\rho(\sigma_i, \sigma_j) \ge m/8$ whenever $i\neq j.$
	
	Then  \begin{align*}
	\bar\rho (\sigma_i, \sigma_j) &\ge c_1c_w^d  r^{\beta+d}  \frac m8\\ 
	&\ge c_1c_w^d  r^{\beta+d}   r^{\alpha\beta -d}  \\
	& \ge c' r^{\beta (1+\alpha)}\\
	& = c' (\eps_P n_P)^{-\frac{\beta(1+\alpha)}{2\beta + d}}\\
	& \triangleq 2s
	\end{align*}

	Now we bound the Kulback-Leibler divergence between the joint distributions $\Pi_{\sigma_i}.$ Using lemma \ref{lemma:kl-unlabeled-density} we get 
	\begin{align*}
	KL(\Pi_{\sigma_i}|| \Pi_{\sigma_j}) & \le c_w^dK(d, \alpha, \beta)\rho(\sigma_i, \sigma_j) \\
	& \le  c_w^dK(d, \alpha, \beta)m \\
	& \le \frac 19 \log_2(M) \\
	\end{align*}
	for suitable $c_w < 1.$

	Finally we appeal to proposition \ref{prop:tsybakov} (and Markov's inequality) to obtain the minimax rate \begin{align*}
\sup_{(P,Q)\in \Pi} \Ex \cE_Q (\hat f)& \ge \sup_{(P,Q)\in \Pi} s \Pr_\Pi \left ( \cE_Q (\hat f)\ge s \right)\\
& \ge s \sup_{\sigma\in \{-1,1\}^{\cZ_1}} \Pi_\sigma \left(\cE_{Q^\sigma} (\hat f)\ge s \right) \\
& \ge s \frac {3-2\sqrt{2}}{8}\\
& \ge C(\eps_Pn_P)^{-\frac{\beta(1+\alpha)}{2\beta + d}}.
\end{align*}

\subsection{Difficulty of the parametric part} To study the difficulty of the parametric part, it is enough to construct two well-separated hypotheses (versus the family of well-separated hypotheses required in our study of the non-parametric part). We refer to the following theorem to establish  lower bound. This particular form of LeCam's bound is taken from \cite{tsybakov2009Introduction}, Chapter 2. The result directly follows from Equation (2.5) and statement (iii) in Theorem 2.2.

\begin{theorem}[LeCam's bound]
\label{th:lb-lecam}

Let $\cP$ be a set of distributions. For any pair $P_0, P_1 \in \cP,$
\[ \inf_{\hat \theta }\sup_{P\in \cP} \Ex_P \big[ d(\hat \theta, \theta(P) ) \big] \ge \frac \Delta 8 e^{-KL(P_0||P_1)} \] where $\Delta = d\big(\theta(P_0), \theta (P_1)\big).$

\end{theorem}

The construction will closely follow the non-parametric part. Let $\sigma \in \{-1, 1\},\ c_w<1,\ r = n_Q^{-\alpha/2},$
\[ r_1 = \begin{cases}
1/\lfloor 1/(c_wr) \rfloor & \text{ if }  \lfloor 1/(c_wr) \rfloor \text{ is odd}\\
1/\left(\lfloor 1/(c_wr) \rfloor+1\right) & \text{ if }  \lfloor 1/(c_wr) \rfloor \text{ is even}
\end{cases} \] Thus $ \frac1{r_1} $ is always an odd number. Define $ 2D+1 = 1/r_1. $  Let us consider the grid of points \[
\cZ = \{ z_i = (1/2+i)r_1 : i = 0, 1, \dots ,2 D  \}.\]

We again consider the function $ v_a\equiv v   $ (as defined in \eqref{eq:u-function-1}) which is $ (\beta, C_\beta) $ H\"older smooth. Hence the 
 following functions are also $ (\beta, C_\beta) $ H\"older smooth.      \[ z \in \cZ, \hspace{0.3cm} \eta_z(x) =   v \Bigg(\frac{2\|x-z\|_\infty }{br_1} - 2\left(\frac1b-1\right)\Bigg) \].
We also define $ C(z) = \big\{x \in [0,1]^d: \big| x^{(1)} - z^{(1)} \big| \le r_1/2 \big\} $

\begin{gather}
\label{eq:conditional-parametric}
\left\{  \begin{aligned}
g_1^\sigma(x) & = \begin{cases}
1 & \quad x \in C(z_0), \\
1 + \eta_{z_i}(x) & \quad x \in C(z_i), \text{ for }i = 1, \dots,D , \\
1 - \eta_{z_i}(x) & \quad x \in C(z_i), \text{ for }i = D+1, \dots, 2D ,
\end{cases} \\
g_0^\sigma(x) &= \begin{cases}
1 & \quad x \in C(z_0), \\
1 - \eta_{z_i}(x) & \quad x \in C(z_i), \text{ for }i = 1, \dots,D , \\
1 + \eta_{z_i}(x) & \quad x \in C(z_i), \text{ for }i = D+1, \dots, 2D 
\end{cases} 
\end{aligned}\right.
\end{gather}
and  $\pi_P^\sigma = 1/2,\ \pi_Q^\sigma = 1/2 + \sigma c_Q n_Q^{-1/2} .$ We construct $P_\sigma, Q_\sigma$ and define 
\begin{equation}
\label{eq:dist-unlabeled-joint-parametric}
    \Pi_\sigma = P_{X,\sigma} ^{\otimes n_P} \otimes Q_\sigma ^{\otimes n_Q}
\end{equation}
 in the similar way. As before, we have $\eps_P \le \pi_P^\sigma \le 1-\eps_P$ and $ q_X \equiv  1. $ Hence, $\mu_-\le q_X\le \mu_+$ and supported on $[0, 1]^d.$ This implies $Q_X$ has strong density. Also, we refer to lemma \ref{lemma:margin-unlabeled-density}  to show $Q_\sigma$ satisfies $\alpha$ margin condition with constant $C_\alpha.$

For $\sigma \in \{-1, 1\}$ let $\eta_\sigma$ and $f_\sigma$ be the regression function and Bayes classifier for $Q_\sigma,$ respectively. Then \[
\begin{aligned}
\bar \rho (\Pi_{1}, \Pi_{-1}) &= \cE_{Q_1} (f_{-1}) \\ &= 2 \Ex_{Q_{1, X}} \Big[ \big|\eta_Q^\sigma(X)-1/2\big| \indicator\{ f_1(X) \neq f_{-1}(X) \}  \Big]\\
& \ge 4 c_Qn_Q^{-1/2}r_1  \\
& \ge c' n_Q^{-\frac{1+\alpha}{2}} = 2s.
\end{aligned}
\]

We refer to lemma C.4 in supplement  to get the following bound 
\[ \begin{aligned}
KL(\Pi_1||\Pi_{-1}) & \le n_Q KL(U_Q^{(1)}||U_Q^{(-1)}) \\
& \le n_Q(1/2 + c_Qn_Q^{-1/2}) \log \left(\frac{1/2 + c_Qn_Q^{-1/2}}{1/2 - c_Qn_Q^{-1/2}}\right)\\
& \quad + (1/2 - c_Qn_Q^{-1/2}) \log \left(\frac{1/2 - c_Qn_Q^{-1/2}}{1/2 + c_Qn_Q^{-1/2}}\right)\\
& \le n_Q\frac243c_Q^2n_Q^{-1}\\
& \le c_Q^2
\end{aligned} \] where $ U_Q^\sigma \sim \text{Bernoulli}(\pi_Q^\sigma). $

Using theorem \ref{th:lb-lecam}   we conclude 
\[
    \sup_{(P, Q) \in \Pi} \Ex \big[ \cE_Q(\hat f) \big] \ge c n_Q^{-\frac{1+\alpha}{2}}.
\]

Finally, we combine the two bounds to get \[ 
\begin{aligned}
\sup_{(P, Q) \in \Pi} \Ex \big[ \cE_Q(\hat f) \big] & \ge c n_Q^{-\frac{1+\alpha}{2}} \vee c(\eps_Pn_P )^{-\frac{\beta(1+\alpha)}{2\beta + d}}\\
& \ge c' \Bigg( (\eps_Pn_P)^{-\frac{\beta}{2\beta + d}} + n_Q^{-1/2}  \Bigg)^{1+\alpha} .
\end{aligned}
\]

\begin{lemma}
\label{lemma:margin-unlabeled-density}
For any $\sigma \in \{-1, 1\}^{\cZ_1}$  $Q_\sigma$ as defined in  \ref{eq:dist-Q-unlabeled-density} and \ref{eq:dist-unlabeled-joint-parametric} satisfies $\alpha$-margin condition with constant $C_\alpha.$ 
\end{lemma}

\begin{proof}
	We prove the lemma for both \ref{eq:dist-Q-unlabeled-density} and \ref{eq:dist-unlabeled-joint-parametric}.
	\paragraph{\textbf{Margin condition for non-parametric part}}
We recall the marginal and regression function as in \ref{eq:marginals-unlabeled-density} and  \ref{eq:reg-fn-unlabeled-density}, respectively. For such a regression function \[ 
\big| \eta_Q^{\sigma}(x) - 1/2 \big|  = \begin{cases}
      \sqrt{\eps_P} \eta_z(x)/2 & \hspace{0.3cm} x \in C(z),\ z \in \cZ_1,\\
      \sqrt{\eps_P} \eta_{f(z)}(x)/2 & \hspace{0.3cm} x \in C(f(z)),\  z \in \cZ_1,\\
    0 & \hspace{0.3cm} x \in C(z),\  z \in \cZ_0.
    \end{cases}
\]

Let \[ t_0 = \begin{cases}
\frac{\sqrt{\eps_P}r^\beta}{6} \left(1-u_a(1/2)\right)^{1/\alpha} & \beta < 1\\
\frac{\sqrt{\eps_P}r^\beta}{6} \left(1-u_a(1/2)\right)  & \beta \ge 1
\end{cases} \]

For  $ t \le t_0,  \beta <1  $ and 
 $ z \in \cZ_1 \cup \cZ_2,  $ we see that 
\begin{equation}
\label{eq:ineq-margin-cond}
	\int_{1/2}^1 e^{-\frac1{a(s-s^2)}}ds \le \int_{0}^1 e^{-\frac1{a(s-s^2)}}ds\left(\frac{6t}{\sqrt{\eps_P}r^\beta}\right)^\alpha.
\end{equation} Hence, 
\[ \begin{aligned}
& Q_X\big\{ 0 <\sqrt{\eps_P}\eta_z(x) \le 2t\big\}\\
 & =   Q_X\left\{  0 <  1-u \left(\frac{2\|x-z\|_\infty }{r_1}\right) \le \left(\frac{6t}{\sqrt{\eps_P}r^\beta}\right)^\alpha  \right\}\\
& = Q_X\left\{ 0<\int_{\frac{2\|x-z\|_\infty}{r_1}}^1 e^{-\frac1{a(s-s^2)}}ds \le \int_{0}^1 e^{-\frac1{a(s-s^2)}}ds\left(\frac{6t}{\sqrt{\eps_P}r^\beta}\right)^\alpha   \right\}\\
& \le  Q_X\left\{  0<e^{-\frac4{a}} \left(1-\frac{2\|x-z\|_\infty}{r_1}\right) \le \int_{0}^1 e^{-\frac1{a(s-s^2)}}ds\left(\frac{6t}{\sqrt{\eps_P}r^\beta}\right)^\alpha   \right\}\\
& =   Q_X\left\{  1-e^{4/a} \int_{0}^1 e^{-\frac1{a(s-s^2)}}ds\left(\frac{6t}{\sqrt{\eps_P}r^\beta}\right)^\alpha <\frac{2\|x-z\|_\infty}{r_1}\le 1    \right\}\\
& = r_1^d \left[1- \Bigg(1-e^{4/a} \int_{0}^1 e^{-\frac1{a(s-s^2)}}ds\left(\frac{6t}{\sqrt{\eps_P}r^\beta}\right)^\alpha\Bigg)^d\right]\\
& \le C \eps_P^{-\alpha/2} c_w^d \frac{r^{d}t^\alpha}{r^{\alpha\beta}} 
\end{aligned} \] where the third inequality is true because $ \frac{2\|x-z\|_\infty}{r_1} \ge 1/2 $ (obtained from inequality \ref{eq:ineq-margin-cond}).
Similarly,  $ t \le t_0 , \beta \ge 1 $ we have  \[
\begin{aligned}
Q_X\big\{ 0 <\sqrt{\eps_P}\eta_z(x) \le 2t\big\} &  \le C b^d \eps_P^{-1/2} c_w^d \frac{r^{d}t}{r^{\beta}}\\ &  \le C b^d \eps_P^{-1/2} c_w^d \frac{r^{d}t}{r^{\beta}}  \big(C't/r^{\beta}\big)^{\alpha - 1} \\
& \le C'' \eps_P^{-\alpha/2} c_w^d \frac{r^{d}t^\alpha}{r^{\alpha\beta}}
\end{aligned}
   \] because $ \alpha \le 1 $ for $ \beta \ge 1.  $ For $ z \in \cZ_3\cup \cZ_4 $ \[ Q_X\big\{ 0 <\sqrt{\eps_P}\eta_z(x) \le 2t\big\} \le  C  c_w^d b^d  \frac{r^{d}t^\alpha}{r^{\alpha\beta}}  \]
Hence, \[ \begin{aligned}
Q_X\big\{ 0 <\left|\eta_Q^\sigma(x)-\frac12 \right| \le t\big\} & \le C \eps_P^{-\alpha/2} c_w^d \frac{r^{d}t^\alpha}{r^{\alpha\beta}} 2m + (r_1^{-d}-2m)C b^d c_w^d \frac{r^{d}t^\alpha}{r^{\alpha\beta}}\\
& \le C_\alpha t^\alpha.
\end{aligned} \] for small enough $ c_w. $

Let $  \frac{\sqrt{\eps_P}r^\beta}{6} \left(1-u_a(1/2)\right)^{1/\alpha}\le t \le \frac13. $ Then \[ \begin{aligned}
Q_X\left\{ 0 <\left|\eta_Q^\sigma(x)-\frac12 \right| \le t\right\} & \le 2m r_1^d + (r_1^{-d}-2m) (br_1)^d\\
& \le 2c_mc_w^d r^{\alpha\beta} + c_b^dr^d\\
& \le \big(2c_mc_w^d + c_b^d\big)r^{\alpha\beta}\\
& \le C_\alpha \left(\frac{\sqrt{\eps_P}r^\beta}{6} \left(1-u_a(1/2)\right)^{1/\alpha}\right)^{\alpha}\\
& \le C_\alpha t^\alpha
\end{aligned} \] where the second last inequality is true for small $ c_w $ and $ c_b. $

\paragraph{\textbf{Margin condition for parametric part}}

 Let $ t \le \frac12 \left(1-u_a(1/2)\right)^{1/\alpha}. $ 
%
For $ i \ge 1 $ \[ Q_X\big\{ 0 <\sqrt{\eps_P}\eta_{z_i}(x) \le 2t\big\} \le C  bc_w rt^\alpha. \]
  Hence, for $ t < c_Qn_Q^{-1/2} $ \[ \begin{aligned}
  Q_X\left\{ 0 <\left|\eta_Q^\sigma(x)-\frac12 \right| \le t\right\} & \le4D C  bc_w rt^\alpha\\
  & \le C_\alpha t^\alpha.
  \end{aligned} \] for small enough $ c_w. $ 
  For  $   c_Qn_Q^{-1/2} \le t \le \frac12 \left(1-u_a(1/2)\right)^{1/\alpha} $ \[ \begin{aligned}
  Q_X\left\{ 0 <\left|\eta_Q^\sigma(x)-\frac12 \right| \le t\right\} & \le4D C  bc_w rt^\alpha + c_w^\alpha r^{\alpha}\\
  & \le C_\alpha t^\alpha.
  \end{aligned} \] for small enough $ c_w. $

\end{proof}

\begin{lemma}
\label{lemma:kl-unlabeled-density}
Let $\Big\{\Pi_\sigma: \sigma \in \{-1, 1\}^{\cZ_1}\Big\}$ be the class of joint distributions defined in \ref{eq:dist-unlabeled-joint-density}. For $\sigma, \sigma' \in  \{-1, 1\}^{\cZ_1}$ we have \[ KL(\Pi_{\sigma}|| \Pi_{\sigma'})  \le c_w^dK(d, \alpha, \beta)\rho(\sigma, \sigma'). \]
\end{lemma}

\begin{proof}
We recall from \ref{eq:marginals-unlabeled-density} $Q_{X, \sigma}\sim \text{Uniform}\big([0,1]^d\big)$ doesn't depend on $\sigma.$ Hence, \[ KL\Big(Q_{X, \sigma}||Q_{X, \sigma'}\Big) = 0. \]

We refer to lemma C.4 in supplement   to show \[
\begin{aligned}
KL\Big(P_\sigma||P_{\sigma'}\Big) & = \frac 12 \Big(g_0^\sigma||g_0^{\sigma'}\Big) + \frac12 \Big(g_1^\sigma||g_1^{\sigma'}\Big)\\
& \le \eps_P c_w^d K'(d,\alpha, \beta ) r^{2\beta + d} \rho(\sigma, \sigma').
\end{aligned}
\]
Combining them we get \[ 
\begin{aligned}
KL\Big(\Pi_\sigma||\Pi_{\sigma'}\Big) & = n_P KL\Big(P_\sigma||P_{\sigma'}\Big) + n_Q KL\Big(Q_{X, \sigma}||Q_{X, \sigma'}\Big) \\
& \le n_P \eps_P  c_w^d K'(d,\alpha, \beta ) r^{2\beta + d} \rho(\sigma, \sigma')\\
& \le c_w ^d K'(d,\alpha, \beta ) c_r^{2\beta+d} (\eps_P n_P) (\eps_Pn_P)^{-1} \rho(\sigma, \sigma')\\
& \le c_w^d K(d, \alpha, \beta) \rho(\sigma, \sigma').
\end{aligned}
\]
\end{proof}

\begin{lemma}
	\label{lemma:verification-sepration-conditionals}
	For  suitable choices of $ c_w, b>0 $  	we have \[ \int_{\cX} (g_1^\sigma(x) - g_0^\sigma(x))^2 dx \ge C^2 .\]
\end{lemma}

\begin{proof}
	Notice that,  
	
	\[\xi_z(x) =  v \Bigg(\frac{2\|x-z\|_\infty }{br_1} - \frac2{r_1}\left(\frac1b-1\right)\Bigg) = 1 \] whenever $\|x-z\|_\infty\le r_1(1 - b)$ and in such regions the density differences are $\ge 1$. Hence, the integral of the difference squared is $\ge r_1^d (1-b)^d $ for non-parametric case and $\ge r_1 (1-b) $ for parametric case. Noticing that there are $(1/r_1^d - 2m)$ (non-parametric) and $2D$ (parametric)
many regions  we get the following	
	\[
\int_{\cX} \big(g_0(x) - g_1(x)\big)^2dx \ge \begin{cases}
(1 - 2c_m c_w^d)(1-b)^d \ \text{for non-parametric part},\\
\Big(1 - \frac1 {2D + 1}\Big)(1-b) \ \text{for parametric part.}
\end{cases}
\]  Here $\frac1{2D + 1} \sim c_w n_Q^{-\alpha/2}. $ The constants $ c_w, b$ can be suitably chosen to satisfy the condition in lemma.

	

\end{proof}

\section{Summary and discussion}
\label{sec:discussion}

We studied the hardness of the label shift problem in two settings, one in which the learner has access to labeled training examples from the target domain, and another in which the learner only has unlabeled training examples from the target domain. We showed that there is a difference between the hardness of the label shift problem in the two settings. In the former setting (in which the learner has access to labeled training examples from the target domain), the minimax rate is $O\big((\eps_Pn_P+n_Q)^{-{\beta}/(2\beta+d)} + n_Q^{-1/2} \big)^{1+\alpha}$, while in the latter setting, the minimax rate is $O\big( (\eps_Pn_P)^{-\beta/(2\beta+d)} + n_Q^{-1/2} \big)^{1+\alpha}$. We attribute this difference in rates is due to the availability of data from the target domain to estimate the the class conditional distributions in the former setting.  


Although we studied the hardness of the label shift problem with non-parametric model classes, we expect our results to generalize to more restrictive model classes. Inspecting the minimax rates, we see that they consist of two terms: a term that depends on the hardness of estimating the (ratio of) class conditional distributions, and a term that depends on the hardness of estimating the ratio of class probabilities in the source and target domains. In non-parametric classification, the hardness of estimating the class conditionals determines the hardness of the non-parametric classification problem in the IID setting \citep{kpotufe2017Lipschitz}. This observation leads us to interpret the first term in the minimax rate as the hardness of finding the optimal classifier, and we expect this term to change with the model class. Thus, for more restrictive model classes, we expect the first term in the minimax rates to improve (vanish faster) in a way that depends on the (reduced) complexity of the model class.

To wrap up, we mention two possible extensions of our work. First, it is natural to consider the label shift problem in high dimension. To keep the problem tractable, we must impose stronger parametric assumptions on the regression function. Such assumptions may also be phrased as assumptions on the class conditional densities because the regression function is (up to a monotone transform) the ratio of the class conditional densities. In the supervised label shift problem, we expect the minimax rate to depend on the hardness of estimating the regression function under the additional parametric assumptions. In the unsupervised label shift problem, we expect the distributional matching approach to provide good estimates of the class probability ratios (because the class probability ratios are low-dimensional), so we also expect the minimax rate to depend on the hardness of estimating the regression function. 

Second, it is natural to consider the possibility of achieving the minimax rate with a classifier that adapts to the smoothness of the regression function and the noise level in the labels. \citet{kpotufe2018Marginal} and \citet{cai2019Transfer} developed adaptive classifiers that attain the minimax rate in the covariate shift and posterior drift problems with Lepski's method. We expect Lepski's method will lead to an adaptive classifier in the label shift problem as well. However, the main goal of this paper is investigating the hardness of the label shift problem, and we defer such methodological questions to future work.

\bibliography{sm,YK}

\begin{thebibliography}{30}
\providecommand{\natexlab}[1]{#1}
\providecommand{\url}[1]{\texttt{#1}}
\expandafter\ifx\csname urlstyle\endcsname\relax
  \providecommand{\doi}[1]{doi: #1}\else
  \providecommand{\doi}{doi: \begingroup \urlstyle{rm}\Url}\fi

\bibitem[Alexandari et~al.(2020)Alexandari, Kundaje, and
  Shrikumar]{alexandari2020EM}
A.~Alexandari, A.~Kundaje, and A.~Shrikumar.
\newblock {{EM}} with {{Bias}}-{{Corrected Calibration}} is
  {{Hard}}-{{To}}-{{Beat}} at {{Label Shift Adaptation}}.
\newblock \emph{arXiv:1901.06852 [cs, stat]}, Jan. 2020.

\bibitem[Angwin et~al.(2016)Angwin, Larson, Mattu, and
  Kirchner]{angwin2016Machine}
J.~Angwin, J.~Larson, S.~Mattu, and L.~Kirchner.
\newblock Machine {{Bias}}.
\newblock
  www.propublica.org/article/machine-bias-risk-assessments-in-criminal-sentencing,
  May 2016.

\bibitem[Audibert and Tsybakov(2007)]{audibert2007fast}
J.-Y. Audibert and A.~B. Tsybakov.
\newblock Fast learning rates for plug-in classifiers.
\newblock \emph{The Annals of statistics}, 35\penalty0 (2):\penalty0 608--633,
  2007.

\bibitem[Azizzadenesheli et~al.(2019)Azizzadenesheli, Liu, Yang, and
  Anandkumar]{azizzadenesheli2019Regularized}
K.~Azizzadenesheli, A.~Liu, F.~Yang, and A.~Anandkumar.
\newblock Regularized {{Learning}} for {{Domain Adaptation}} under {{Label
  Shifts}}.
\newblock \emph{arXiv:1903.09734 [cs, stat]}, Mar. 2019.

\bibitem[{Ben-David} et~al.(2010){Ben-David}, Blitzer, Crammer, Kulesza,
  Pereira, and Vaughan]{ben-david2010theory}
S.~{Ben-David}, J.~Blitzer, K.~Crammer, A.~Kulesza, F.~Pereira, and J.~W.
  Vaughan.
\newblock A theory of learning from different domains.
\newblock \emph{Machine Learning}, 79\penalty0 (1-2):\penalty0 151--175, May
  2010.
\newblock ISSN 0885-6125, 1573-0565.
\newblock \doi{10.1007/s10994-009-5152-4}.

\bibitem[Cai and Wei(2019)]{cai2019Transfer}
T.~T. Cai and H.~Wei.
\newblock Transfer {{Learning}} for {{Nonparametric Classification}}: {{Minimax
  Rate}} and {{Adaptive Classifier}}.
\newblock \emph{arXiv:1906.02903 [cs, math, stat]}, June 2019.

\bibitem[Choi et~al.(2017)Choi, Fazekas, Sandler, and Cho]{choi2017transfer}
K.~Choi, G.~Fazekas, M.~Sandler, and K.~Cho.
\newblock Transfer learning for music classification and regression tasks.
\newblock \emph{arXiv preprint arXiv:1703.09179}, 2017.

\bibitem[Dastin(2018)]{dastin2018Amazon}
J.~Dastin.
\newblock Amazon scraps secret {{AI}} recruiting tool that showed bias against
  women.
\newblock \emph{Reuters}, Oct. 2018.

\bibitem[Du~Plessis and Sugiyama(2014)]{du2014semi}
M.~C. Du~Plessis and M.~Sugiyama.
\newblock Semi-supervised learning of class balance under class-prior change by
  distribution matching.
\newblock \emph{Neural Networks}, 50:\penalty0 110--119, 2014.

\bibitem[Garg et~al.(2020)Garg, Wu, Balakrishnan, and Lipton]{garg2020Unified}
S.~Garg, Y.~Wu, S.~Balakrishnan, and Z.~C. Lipton.
\newblock A {{Unified View}} of {{Label Shift Estimation}}.
\newblock \emph{arXiv:2003.07554 [cs, stat]}, Mar. 2020.

\bibitem[Gong et~al.(2012)Gong, Shi, Sha, and Grauman]{gong2012geodesic}
B.~Gong, Y.~Shi, F.~Sha, and K.~Grauman.
\newblock Geodesic flow kernel for unsupervised domain adaptation.
\newblock In \emph{2012 IEEE Conference on Computer Vision and Pattern
  Recognition}, pages 2066--2073. IEEE, 2012.

\bibitem[Gyorfi(1978)]{gyorfi1978rate}
L.~Gyorfi.
\newblock On the rate of convergence of nearest neighbor rules (corresp.).
\newblock \emph{IEEE Transactions on Information Theory}, 24\penalty0
  (4):\penalty0 509--512, 1978.

\bibitem[Huang et~al.(2013)Huang, Li, Yu, Deng, and Gong]{huang2013cross}
J.-T. Huang, J.~Li, D.~Yu, L.~Deng, and Y.~Gong.
\newblock Cross-language knowledge transfer using multilingual deep neural
  network with shared hidden layers.
\newblock In \emph{2013 IEEE International Conference on Acoustics, Speech and
  Signal Processing}, pages 7304--7308. IEEE, 2013.

\bibitem[Iyer et~al.(2014)Iyer, Nath, and Sarawagi]{iyer2014Maximum}
A.~Iyer, S.~Nath, and S.~Sarawagi.
\newblock Maximum mean discrepancy for class ratio estimation: Convergence
  bounds and kernel selection.
\newblock In \emph{International Conference on Machine Learning}, pages
  530--538. PMLR, 2014.

\bibitem[Jain et~al.(2016)Jain, White, and Radivojac]{jain2016Estimating}
S.~Jain, M.~White, and P.~Radivojac.
\newblock Estimating the class prior and posterior from noisy positives and
  unlabeled data.
\newblock \emph{Advances in neural information processing systems},
  29:\penalty0 2693--2701, 2016.

\bibitem[Kpotufe(2017)]{kpotufe2017Lipschitz}
S.~Kpotufe.
\newblock Lipschitz {{Density}}-{{Ratios}}, {{Structured Data}}, and
  {{Data}}-driven {{Tuning}}.
\newblock In \emph{Proceedings of the 20th {{International Conference}} on
  {{Artificial Intelligence}} and {{Statistics}}}, page~14, {Fort Lauderdale,
  Florida}, 2017.

\bibitem[Kpotufe and Martinet(2018)]{kpotufe2018Marginal}
S.~Kpotufe and G.~Martinet.
\newblock Marginal {{Singularity}}, and the {{Benefits}} of {{Labels}} in
  {{Covariate}}-{{Shift}}.
\newblock \emph{arXiv:1803.01833 [cs, stat]}, Mar. 2018.

\bibitem[Lipton et~al.(2018)Lipton, Wang, and Smola]{lipton2018Detecting}
Z.~C. Lipton, Y.-X. Wang, and A.~Smola.
\newblock Detecting and {{Correcting}} for {{Label Shift}} with {{Black Box
  Predictors}}.
\newblock \emph{arXiv:1802.03916 [cs, stat]}, July 2018.

\bibitem[Maity et~al.(2021)Maity, Dutta, Terhorst, Sun, and
  Banerjee]{maity2021linear}
S.~Maity, D.~Dutta, J.~Terhorst, Y.~Sun, and M.~Banerjee.
\newblock A linear adjustment based approach to posterior drift in transfer
  learning.
\newblock \emph{arXiv preprint arXiv:2111.10841}, 2021.

\bibitem[Mansour et~al.(2009)Mansour, Mohri, and
  Rostamizadeh]{mansour2009Domain}
Y.~Mansour, M.~Mohri, and A.~Rostamizadeh.
\newblock Domain {{Adaptation}}: {{Learning Bounds}} and {{Algorithms}}.
\newblock \emph{arXiv:0902.3430 [cs]}, Feb. 2009.

\bibitem[Pan and Yang(2009)]{pan2009survey}
S.~J. Pan and Q.~Yang.
\newblock A survey on transfer learning.
\newblock \emph{IEEE Transactions on knowledge and data engineering},
  22\penalty0 (10):\penalty0 1345--1359, 2009.

\bibitem[Rigollet and Vert(2009)]{rigollet2009optimal}
P.~Rigollet and R.~Vert.
\newblock Optimal rates for plug-in estimators of density level sets.
\newblock \emph{Bernoulli}, 15\penalty0 (4):\penalty0 1154--1178, 2009.

\bibitem[Saerens et~al.(2002)Saerens, Latinne, and
  Decaestecker]{saerens2002adjusting}
M.~Saerens, P.~Latinne, and C.~Decaestecker.
\newblock Adjusting the outputs of a classifier to new a priori probabilities:
  a simple procedure.
\newblock \emph{Neural computation}, 14\penalty0 (1):\penalty0 21--41, 2002.

\bibitem[Sch{\"o}lkopf et~al.(2012)Sch{\"o}lkopf, Janzing, Peters, Sgouritsa,
  Zhang, and Mooij]{scholkopf2012causal}
B.~Sch{\"o}lkopf, D.~Janzing, J.~Peters, E.~Sgouritsa, K.~Zhang, and J.~Mooij.
\newblock On causal and anticausal learning.
\newblock \emph{arXiv preprint arXiv:1206.6471}, 2012.

\bibitem[Storkey(2009)]{storkey2009training}
A.~Storkey.
\newblock When training and test sets are different: characterizing learning
  transfer.
\newblock \emph{Dataset shift in machine learning}, pages 3--28, 2009.

\bibitem[Tsybakov(2009)]{tsybakov2009Introduction}
A.~B. Tsybakov.
\newblock \emph{Introduction to Nonparametric Estimation}.
\newblock Springer Series in Statistics. {Springer}, {New York ; London}, 2009.
\newblock ISBN 978-0-387-79051-0 978-0-387-79052-7.
\newblock OCLC: ocn300399286.

\bibitem[Tsybakov et~al.(2004)]{tsybakov2004optimal}
A.~B. Tsybakov et~al.
\newblock Optimal aggregation of classifiers in statistical learning.
\newblock \emph{The Annals of Statistics}, 32\penalty0 (1):\penalty0 135--166,
  2004.

\bibitem[Tzeng et~al.(2017)Tzeng, Hoffman, Saenko, and
  Darrell]{tzeng2017adversarial}
E.~Tzeng, J.~Hoffman, K.~Saenko, and T.~Darrell.
\newblock Adversarial discriminative domain adaptation.
\newblock In \emph{Proceedings of the IEEE Conference on Computer Vision and
  Pattern Recognition}, pages 7167--7176, 2017.

\bibitem[Weiss et~al.(2016)Weiss, Khoshgoftaar, and Wang]{weiss2016survey}
K.~Weiss, T.~M. Khoshgoftaar, and D.~Wang.
\newblock A survey of transfer learning.
\newblock \emph{Journal of Big data}, 3\penalty0 (1):\penalty0 9, 2016.

\bibitem[Zhang et~al.(2015)Zhang, Gong, and Sch{\"o}lkopf]{zhang2015multi}
K.~Zhang, M.~Gong, and B.~Sch{\"o}lkopf.
\newblock Multi-source domain adaptation: A causal view.
\newblock In \emph{AAAI}, volume~1, pages 3150--3157, 2015.

\end{thebibliography}

\newpage
\appendix

\section{Simulation details}
\label{sec:simulation-details}
The codes and simple demonstrations are provided in https://github.com/smaityumich/label-shift. 
\paragraph{Data generating process}
We start by describing the data generating process $\cD(n, \pi)$. Let $ \mu_X $ denote the probability distribution of random variable $X$. For $a<b$ we define the $ \text{TN}(\mu, \sigma^2, a, b) $ as the $N(\mu,\sigma^2)$ distribution truncated to the interval $[a,b]$. 
Given inputs sample size $n$ and class probability $\pi$ for class $ 1 $, $\cD(n, \pi)$ returns a pair $(\bx,\by)$ where, 
\begin{itemize}
	\item $ \by $ is a $ n $ dimensional random vector with IID $\Ber(1,\pi)$ components.
	\item $ \bx = [x_1, \dots , x_{n}]^T $ is a $ n\times 3 $ random matrix with independent rows. The distribution of the $i$-th row is \[ x_i \mid y_i \sim y_i * \mu_{\text{TN}(0,1, -2, 2)}^{\otimes 3} + (1-y_i) * \mu_{\text{TN}\left(2,1, 0, 4\right)}^{\otimes 3}. \] 
	We observe that the features are supported on the hypercube $[-2,4]^{ 4}$


\end{itemize}

Given the data generating procedure $\cD(n, \pi)$ we generate the following synthetic data:
\begin{itemize}[label=\textendash]
	\item $ (\bx_P, \by_P) = \cD(n_P, 0.5) $ is the data from source population. 
	\item $ (\bx_Q, \by_Q) = \cD(n_Q, 0.75) $ is the data from target population.
	\item $ (\bx_{\text{test}}, \by_{\text{test}}) = \cD(n_{\text{test}}, 0.75) $ is the data for evaluating the performance of the classifiers, which shall also be referred as test data. Note the distribution of test data is same as the target distribution.
\end{itemize}

\paragraph{Other classifiers} Next we describe the classifiers that we shall consider for our comparative study: \begin{itemize}
	\item \texttt{Labeled-Classifier} is a function that takes the data $( \bx_P,  \by_P) $ from source, $ (\bx_Q,  \by_Q) $ from target distribution and  bandwidth parameters $ h_0, h_0>0 $ as inputs,  and returns the  classifier \[ \texttt{CL-labeled}  \triangleq  \texttt{Labeled-Classifier}(\bx_P, \by_P, \bx_Q, \by_Q,  h_0, h_1 )\] as defined in section 3, equation 3.1 and 3.3. Throughout our simulation study we use $\beta^*$-valid kernel (definition \ref{def:beta-valid-kernel}, \cite{tsybakov2009Introduction}, definition 1.2 and section 1.2.2) with $\beta^*$  as $ 3.$
	Since the densities are infinitely differentiable on the interior of support, we expect to realize a rate of convergence with $\beta = 3.$ In that regard, 
	we fix the bandwidth parameter $ h_0 = n_0 ^{-\frac 1{10}}, \   h_1 = n_1 ^{-\frac 1{10}} . $


			\item \texttt{Classical-Classifier} is a function that takes the target data $ (\bx_Q, \by_Q) $ and a bandwidth parameter $ h>0 $ as input and returns a classifier \[ \texttt{CL-classical} \triangleq \texttt{Classical-Classifier} (\bx_Q, \by_Q, h) \] where \[ \texttt{CL-classical}(x) =  \indicator\left\{  \frac{\sum_{i=1}^{n_Q} Y_i^{Q} K_{h}(x-X_i^Q)  }{\sum_{i=1}^{n_Q}  K_{h}(x-X_i^Q)} \ge \frac 12 \right\}. \] We fix $h_Q = \frac12 n_Q^{-\frac16}.$

			\item \texttt{Unlabeled-Classifier}   takes the data $ (\bx_P, \by_P) $ from source, $ x_Q $ from target distribution, a classifier $ g $ fitted on the source distribution and  bandwidth parameters $ h_0, h_1 >0 $ as inputs,  and returns the  classifier $$ \texttt{CL-unlabeled}  \triangleq  \texttt{Unlabeled-Classifier}(x_P, y_P, x_Q, g,   h )$$. It first estimates $\pi_Q$ using distribution matching approach \cite{lipton2018Detecting, azizzadenesheli2019Regularized, alexandari2020EM}. For this particular simulation study we use  \citet{lipton2018Detecting}.   The classifier $ g $ is a non-parametric classifier \cite{audibert2007fast} fitted on $x_P, \ y_P$ with same kernel and bandwidths. The final classifier is  obtained by an appropriate re-weighting of the $P$-samples.    We fix $h_0 = (n_0')^{-1/7}, h_1 = (n_1')^{-1/7}.$

		\item \texttt{Oracle-Classifier}  takes the source data $ x_P, \ y_P $, and  $ \pi_Q $ and  bandwidths $ h_0, h_1>0 $ as inputs,  and returns a classifier \[ \texttt{CL-oracle} \triangleq \texttt{Oracle-Classifier}(x_P, y_P, w_0, w_1, h) \] exactly same as in \texttt{Unlabeled-Classifier} with actual value  $\pi_Q$ used for data generating purpose.Here, we use same kernel and bandwidths.

\end{itemize}



\section{Proof of Theorem \ref{th:upper-bound} and \ref{th:upper-bound-unlabeled}}
\label{sec:ub-proofs}


\subsection{Definitions}

In this subsection we define $\beta$-valid kernel, convergence rates and parametric convergence rates.  

\begin{definition}[$ \beta $-valid kernel]
\label{def:beta-valid-kernel}
	Let $ K $ be a real-valued function on $ \reals^d, $ with support $ [-1, 1]^d. $ For fixed $ \beta>0, $ the function $ K(\cdot) $ is said to be a $ \beta $-valid kernel if it satisfies $ \int K=1, \ \int |K|^p < \infty $ for any $ p \ge 1, \ \int \|t\|^\beta |K(t)| dt < \infty  $ and, in the case $ \betafloor\ge 1,  $ it satisfies $ \int t^s K(t)dt = 0 $ for any $ s = (s_1, \dots , s_d) \in \bbN ^d$  such that  $1 \le |s| \le \betafloor.$
\end{definition}
We refer to \cite{tsybakov2009Introduction} section 1.2.2 for construction of  such kernels for 1 dimensional data. Kernel for $d $-dimensional data can be constructed as $K'(x_1, \dots, x_d) = K(x_1)\dots K(x_d). $




The proof of upper bound is broken into some technical lemmas. These lemmas are states in terms of general rate of convergence for parameter $\pi_Q$ and densities $g_0$ and $g_1$. Formal definitions of these rates are given later. 
 We denote the source-target sample size pair $(n_P, n_Q)$ by $n$, \ie\  $n \equiv (n_P, n_Q).$

\begin{definition}[Parameter estimation rate]
For $(P, Q)\in \Pi$    let  $\hat \theta_n$ is an estimator of the parameter $\theta = \theta(P, Q) \in \reals.$ For  non-increasing sequences $(\varphi_n)$ and $(\psi_n)$  of positive numbers we say $\hat \theta_n$ converges to $\theta$ at a $(\varphi_n, \psi_n)$-rate uniformly on $\cP$ if there exists positive numbers $c_1, c_2, c_\psi$  for any $\delta>0$ \[ \sup_{(P,Q)\in \cP} \Pr \big( |\hat \theta_n - \theta|>\delta \big) \le c_1 \exp\big(-c_2(\delta/\varphi_n)^2\big), \hspace{0.3cm} \delta \le c_\psi \psi_n. \] 
\end{definition}

\begin{definition}[Function estimation rate]
    For a pair $(P, Q) \in \cP$ let $\hat p_n$ be an estimator for $p \equiv p(P, Q) : \cX \to \reals. $ Let $(\varphi_n)$   be a sequence of non-increasing positive numbers. We say $\hat p_n $ converges pointwise to $p$ at a $\varphi_n$-rate for $(P, Q)$ if there exists positive constants $c_1, c_2,\Delta$ and $c_\varphi$ such that  for $Q_X$ almost surely all $x\in \cX$ we have \[  \Pr \big( |\hat p_n(x) -  p(x) |>\delta \big) \le c_1\exp\big(- c_2 (\delta/\varphi_n)^2\big), \hspace{0.5cm} c_\varphi\varphi_n < \delta <\Delta. \] We say $\hat p_n $ converges pointwise to $p$ at a $\varphi_n$-rate uniformly on $\cP,$ if the above happens for all $(P, Q)\in \cP$ for some constants $c_1, c_2, \Delta$ and $c_\varphi$ independent of $(P, Q).$
\end{definition}



\subsection{Required Lemmas}
The proof of upper bound is broken into three main lemmas, which are presented in this subsection. 
\begin{lemma}[Concentration of $ \hat\eta_Q $]
\label{lemma:conc-reg-fn}
Let $\hat\pi_Q^{(n)}$ converges to $\pi_Q$ at a $(\varphi_n, \psi_n)$-rate and for $i \in \{0, 1\}$ let $\hat g_i^{(n)}$ converges pointwise to $g_i$ at a $\tau_n^{(i)}$ rate, uniformly on $\cP.$ Then there exists positive constants $c_0, c_1, c_2$ such that	$\hat \eta_Q$ converges pointwise to $\eta_Q$ at a $\big( c_0(1-\pi_Q + \psi_n ) \tau_n^{(0)}+ c_1 (\pi_Q + \psi_n ) \tau_n^{(1)} + c_2 \varphi_n \big)$-rate uniformly over $\cP. $
\end{lemma}
\begin{proof} We break the proof in several steps.
	
\noindent	\underline{\textit{Step 1: Upper bound for $ \hat \eta_Q. $}}
	Note that both $ \hat \eta_Q(x) $ and $ \eta_Q(x) $ can be expresses as 
	\begin{align*}
		\eta_Q(x) & = \frac{\pi_Q g_1(x)}{\pi_Q g_1(x) + (1-\pi_Q) g_0(x)} \\
		 \hat \eta_Q(x) & = \frac{\hat \pi_Q \hat g_1(x)}{\hat \pi_Q \hat g_1(x) + (1-\hat \pi_Q)\hat g_0(x)}.
	\end{align*} For the ease of notation, let us define $ u(x) = \pi_Qg_1(x), \ v(x) = (1-\pi_Q)g_0(x), \ \hat u(x) = \hat \pi_Q\hat g_1(x) $ and $ \hat v(x) = (1-\hat \pi_Q) \hat g_0(x). $ Then 
	\begin{align*}
		|\hat \eta_Q(x) - \eta_Q(x)| = & \left| \frac{\hat u(x)}{\hat u(x) + \hat v(x)} - \frac{u(x)}{u(x) + v(x)} \right|\\
		= & \frac{|\hat u(x)v(x) - u(x)\hat v(x)|}{(\hat u(x)+\hat v(x))(u(x)+v(x))}\\
		= & \frac{|\hat u(x)v(x) - \hat u(x)\hat v(x) + \hat u(x)\hat v(x) - u(x)\hat v(x)|}{(\hat u(x)+\hat v(x))(u(x)+v(x))}\\
			= & \frac{|\hat u(x)[v(x) -\hat v(x)] +\hat v(x)[ \hat u(x) - u(x)]|}{(\hat u(x)+\hat v(x))(u(x)+v(x))}\\
		\le & \frac{\hat u(x)|v(x) -\hat v(x)| +\hat v(x)| \hat u(x) - u(x)|}{(\hat u(x)+\hat v(x))(u(x)+v(x))}\\
		\le & \frac{|\hat u(x) - u(x)|+ |\hat v(x) - v(x)|}{q_X(x)}.
	\end{align*}
	
\noindent	\underline{\textit{Step 2: Upper bound of $ \hat u (x) $ and $ \hat v(x) .$}}
	Since,  $\hat \pi_Q^{(n)}\equiv \hat \pi_Q$ converges to $\pi_Q$ at a $(\varphi_n, \psi_n)$-rate, there exists $c_1^{(\pi)}, c_2^{(\pi)}, c_\pi>0$ such that for any $\delta>0$ \[  \sup_{(P, Q)\in \Pi} \Pr \Big( \big| \hat \pi_Q^{(n)} - \pi_Q \big|>\delta \Big)\le c_1^{(\pi)}\exp \Big(-c_2^{(\pi)}(\delta/\varphi_n)^2\Big), \hspace{0.3cm} \delta \le c_\pi\psi_n \]
	The above inequality can be rewritten as \begin{equation}
	    \sup_{(P, Q)\in \Pi} \Pr \Big( \big| \hat \pi_Q^{(n)} - \pi_Q \big|>\delta \varphi_n/\sqrt{c_2^{(\pi)}} \Big)\le c_1^{(\pi)}\exp \big(-\delta^2\big), \hspace{0.3cm} \delta \le c_\pi\psi_n \sqrt{c_2^{(\pi)}}/\varphi_n.
	\end{equation}
	Fix $i\in \{0, 1\}.$
	Since,  $\hat g_i^{(n)}$ converges pointwise to $g_i$ at a $\tau_{n}^{(i)}$-rate, there exists positive constants $c_{1,i}, c_{2,i}, \Delta_i, c_{\tau, i}$ such that for any $(P, Q)\in \cP$ for $Q_X$ almost surely on $\cX$ we have 
	\[  \Pr \Big(\big|\hat g_i^{(n)}(x)-g_i(x)\big|>\delta\Big) \le c_{1,i}\exp\big(-c_{2,i}(\delta/\tau_{n}^{(i)})^2\big)\hspace{0.5in} c_{\tau, i} \tau_n^{(i)} < \delta < \Delta_i. \]
	The above inequality can be rewritten as 
	\begin{equation}
	    \Pr \Big(\big|\hat g_i^{(n)}(x)-g_i(x)\big|>\delta\tau_n^{(i)}/\sqrt{c_{2,i}}\Big) \le c_{1,i}\exp\big(-\delta^2\big),\hspace{0.3in} c_{\tau, i}  < \delta < \Delta_i/\tau_n^{(i)}.
	\end{equation}
	
	Using union bound, for $Q_X$ almost surely $x\in \cX$, with probability at least $1- (c_{1,0}+c_{1,1}+c_1^{(\pi)})e^{-\delta^2}= 1- c_1'e^{-\delta^2}$ we have the following
	\begin{align}
	    \big| \hat \pi_Q^{(n)} - \pi_Q \big|\le &\delta \varphi_n/\sqrt{c_2^{(\pi)}}  \\
	    \big|\hat g_0^{(n)}(x)-g_0(x)\big|\le & \delta\tau_n^{(0)}/\sqrt{c_{2,0}}  \\
	     \big|\hat g_1^{(n)}(x)-g_1(x)\big|\le & \delta\tau_n^{(1)}/\sqrt{c_{2,1}}  
	\end{align}
	for $c_{\tau, 0} \vee c_{\tau, 1} < \delta < (\Delta_0/\tau_n^{(0)}) \wedge (\Delta_1/\tau_n^{(1)})\wedge (c_\pi\psi_n\sqrt{c_2^{(\pi)}}/\varphi_n).$
	From the above inequalities, we get
	\begin{align*}
	 	|\hat u(x) - u(x)| & = |\hat \pi_Q^{(n)}\hat g_1^{(n)}(x) - \pi_Qg_1(x)|\\
		& = |\hat \pi_Q\upn \hat g_1\upn(x) - \hat \pi_Q\upn g_1(x) + \hat \pi_Q\upn g_1(x) - \pi_Q g_1(x)|\\
		& \le \hat \pi_Q\upn  |\hat g_1\upn (x) - g_1(x)| + g_1(x) |\hat \pi_Q\upn  - \pi_Q| \\
		& \le \big(|\hat \pi_Q\upn - \pi_Q| + \pi_Q\big) |\hat g_1\upn (x) - g_1(x)| + g_1(x) |\hat \pi_Q\upn  - \pi_Q|\\
		& \le \big(\pi_Q + \delta \varphi_n/\sqrt{c_2^{(\pi)}} \big)
		\delta\tau_n^{(1)}/\sqrt{c_{2,1}} + L^* \delta \varphi_n/\sqrt{c_2^{(\pi)}}\\
		& \le \big(\pi_Q + c_\pi \psi_n \big)
		\delta\tau_n^{(1)}/\sqrt{c_{2,1}} + L^* \delta \varphi_n/\sqrt{c_2^{(\pi)}},
	\end{align*} and similarly \[ |\hat v(x) - v(x)|\le \big(1- \pi_Q + c_\pi \psi_n \big)
		\delta\tau_n^{(0)}/\sqrt{c_{2,0}} + L^* \delta \varphi_n/\sqrt{c_2^{(\pi)}} . \] 
	
\noindent	\underline{\textit{Step 3: Concentration of $ \eta_Q $.}}
	Under strong density assumption, for $Q_X$ almost surely $x\in \cX$, with probability at least $ 1- c_1'e^{-\delta^2}$ we have
	
		\begin{align*}
		     |\hat \eta_Q(x) - \eta_Q(x)|  \le & \frac{\delta}{\mu_+}\Bigg[\big(\pi_Q + c_\pi \psi_n \big)
		\tau_n^{(1)}/\sqrt{c_{2,1}}\\ & + \big(1- \pi_Q + c_\pi \psi_n \big)
		\tau_n^{(0)}/\sqrt{c_{2,0}} +2 L^*  \varphi_n/\sqrt{c_2^{(\pi)}} \Bigg]\\
		= & \delta r_n 
		\end{align*}
		for $c_{\tau, 0} \vee c_{\tau, 1} < \delta < (\Delta_0/\tau_n^{(0)}) \wedge (\Delta_1/\tau_n^{(1)})\wedge (c_\pi\psi_n\sqrt{c_2^{(\pi)}}/\varphi_n).$
	We can rewrite this as for $Q_X$ almost surely $x\in \cX$ 
	\[
	\sup_{(P, Q)\in \cP} \Pr \Big( |\hat \eta_Q(x) - \eta_Q(x)| > \delta  \Big) \le c_1' \exp \big(- (\delta / r_n)^2\big)
	\]  for $\big(c_{\tau, 0} \vee c_{\tau, 1}\big)r_n < \delta < r_n\Big[(\Delta_0/\tau_n^{(0)}) \wedge (\Delta_1/\tau_n^{(1)})\wedge (c_\pi\psi_n\sqrt{c_2^{(\pi)}}/\varphi_n)\Big].$ Let  $ c_r = c_{\tau, 0} \vee c_{\tau, 1}.$ Since, $r_n/\tau_n^{(0)} \ge 1/ \sqrt{c_{2, 0}}$, $r_n/\tau_n^{(1)} \ge 1/ \sqrt{c_{2, 1}}$ and $ r_n \psi_n/\varphi_n \ge c_\pi \sqrt{c_2^{(\pi)}} $ letting $\Delta = (\Delta_0 / \sqrt{c_{2, 0}}) \wedge (\Delta_1 / \sqrt{c_{2, 1}}) \wedge (c_\pi \sqrt{c_2^{(\pi)}})$ we get, for $Q_X$ almost surely $x\in \cX$ 
	\[
	  \sup_{(P, Q)\in \cP} \Pr \Big( |\hat \eta_Q(x) - \eta_Q(x)| > \delta  \Big) \le c_1' \exp \big(- (\delta / r_n)^2\big), \hspace{0.3cm} c_r r_n < \delta < \Delta.  
\]

\end{proof}

\begin{lemma}[Bound on $ \Ex\cE_Q(\hat f)  $]
\label{lemma:margin-bound}
	Suppose an estimate $\hat \eta_Q $ of the regression function $ \eta_Q $ converges pointwise at a $r_n$-rate uniformly on $\cP$. Then under $\alpha$-margin condition  there exists a positive constant $C$ such that 
	 \[ \sup_{(P, Q) \in \cP} \Ex\Big[\cE_Q(\hat f)\Big] \le C r_n^{{1+\alpha}}.  \] .
\end{lemma}
\begin{proof}
Since, $\hat \eta_Q$ converges pointwise to $\eta_Q$ at a $r_n$-rate, there exist positive constants $c_1, c_2, \Delta, c_r$ such that for $Q_X$ almost surely all $x\in \cX$ \[
\sup_{(P, Q)\in \cP} \Pr \Big( |\hat \eta_Q(x) - \eta_Q(x)|>\delta \Big) \le c_1 \exp\big(- c_2 (\delta/r_n)^2 \big), \hspace{0.3cm} c_rr_n <\delta < \Delta.
\] 
Recall, under $\alpha$-margin condition there exists $c_\alpha>0$ such that  \[
Q_X \big(|\eta_Q(x) - 1/2|>\delta \big) \le c_\alpha \delta ^\alpha.
\]
We replace $c_\alpha$ by $c_\alpha (\Delta/2)^{-\alpha} \vee 1$ so that $c_\alpha(\Delta/2)^\alpha\ge 1. $

		We define the following events. $$ A_0 = \left\{ x\in \reals^d: 0< \left| \eta_Q(x) - 1/2\right| < \delta         \right\} $$ and for $j \ge 1, $ $$   A_j = \left\{ x\in \reals^d: 2^{(j-1)}\delta< \left| \eta_Q(x) - 1/2\right| < 2^j\delta       \right\}  $$ Now, \begin{align*}
	\cE_Q(\hat f)  = & 2 \Ex_X \left(\left| \eta_Q(X) -1/2 \right|\indicator_{\{\hat f(X)\neq f^*(X)\}}  \right) \\
	= & 2\sum_{j=0}^\infty \Ex_X\left(\left| \eta_Q(X) -1/2 \right|\indicator_{\{\hat f(X)\neq f^*(X)\}} \indicator_{\{X\in A_j\}} \right) \\
	\le & 2 \delta \Ex_X\left( 0< \left| \eta_Q(X) -\frac12 \right|< \delta  \right) \\
	& + 2\sum_{j=1}^\infty \Pr_X\left(\left| \eta_Q(X) -\frac12 \right|\indicator_{\{\hat f(X)\neq f^*(X)\}} \indicator_{\{X\in A_j\}} \right)
	\end{align*}
	
	Let $\delta = c_r r_n.$ Then $2^{j-1}\delta \ge \Delta/2  $ if $j \ge  \log_2 (\Delta / \delta).$
	
	On the event $ \{ \hat f\neq f^*  \} $ we have $ \left| \eta_Q - \frac12 \right| \le \left| \hat \eta - \eta \right|. $ So, for any $1\le  j < \log_2 (\Delta / \delta)$  we get \begin{align*}
	2\Ex_X&\Ex \left(\left| \eta_Q(X) -\frac12 \right|\indicator_{\{\hat f(X)\neq f^*(X)\}} \indicator_{\{X\in A_j\}} \right) \\
	& \le 2^{j+1}\delta \Ex_X\Ex \left(\indicator_{\{|\hat \eta_Q(X)- \eta_Q(X)| \ge2^{j-1}\delta \}} \indicator_{\{0< |\eta_Q(X)-1/2|<2^{j}\delta\}} \right) \\
	& = 2^{j+1}\delta \Ex_X \left[\Pr\left(\indicator_{\{|\hat \eta_Q(X)- \eta_Q(X)| \ge2^{j-1}\delta \}}\right) \indicator_{\{0< |\eta_Q(X)-1/2|<2^{j}\delta\}} \right] \\
	& \le 2^{j+1}\delta \exp\left(-(2^{j-1}\delta/r_n)^2\right)\Pr_X (0< |\eta_Q(X)-1/2|<2^{j}\delta)\\
	& \le 2C_\alpha 2^{j(1+\alpha)}\delta ^{1+\alpha} \exp\left(-(2^{j-1}\delta/r_n)^2\right).
	\end{align*}
	
	For $j \ge \log_2 (\Delta / \delta)$ we have  $\Pr \big( |\eta _Q(x) - 1/2|\ge 2^{j-1}\delta \big) = 1$ and hence \[ \Pr \Big( 2^{j-1}\delta <|\eta_Q(x) - 1/2|< 2^{j}\delta \Big ) = 0. \] This means \begin{align*}
	2\Ex_X&\Ex \left(\left| \eta_Q(X) -\frac12 \right|\indicator_{\{\hat f(X)\neq f^*(X)\}} \indicator_{\{X\in A_j\}} \right) \\
	& \le 2^{j+1}\delta \Ex_X\Ex \left(\indicator_{\{|\hat \eta_Q(X)- \eta_Q(X)| \ge2^{j-1}\delta \}} \indicator_{\{2^{j-1}\delta< |\eta_Q(X)-1/2|<2^{j}\delta\}} \right) \\
	& = 2^{j+1}\delta \Ex_X \left[\Pr\left(\indicator_{\{|\hat \eta_Q(X)- \eta_Q(X)| \ge2^{j-1}\delta \}}\right) \indicator_{\{2^{j-1}\delta< |\eta_Q(X)-1/2|<2^{j}\delta\}} \right] \\
	& = 0.
	\end{align*}
	Finally,  we get \begin{align*}
	\sup_{\Pr\in \cP} \Ex\Big[\cE_Q(\hat f)\Big] & \le 2C_\alpha \left( \delta^{1+\alpha} + \sum_{j\ge 1}  2^{j(1+\alpha)} \delta^{1+\alpha}\exp\left(-(2^{j-1}\delta/r_n)^2\right)   \right) \\
	& \le C r_n^{{1+\alpha}}.
	\end{align*}
\end{proof}
Now we provide a rate of convergence for the density estimator $\hat g_1^{(n_1)}$. Later we only provide the statement for the rate of convergence for the density estimator $\hat g_0 ^{(n_1)}.$ The proof will be similar. 
\begin{lemma}[Rate of convergence for conditional density estimates]
\label{lemma:rate-g1}
 Let $(P, Q) \in \cP.$ For $i = 0, 1$ let $m_i(n) \equiv m_i(n, \pi) = \Ex [n_i].$ Then for $h_i = n_i^{-1/(2\beta + d)}$ the density estimator $\hat g_i^{(n_i)}$ converges pointwise to   $g_i$ at a $m_i(n)^{-\beta/(2\beta + d)}$-rate.
\end{lemma}

\begin{proof}
We only prove the result for $g_1.$ The proof for $g_0$ will be similar. 
         Let $n_1$  be the number of sample points with label $1$ (which is sum of independent Bernoulli variables). Let $v_1(n) = \text{Var}(n_1).$ Clearly, $v_1(n) \le m_1(n).$
         
         Using Bernsteins's inequality, for any $t>0$ we get 
         \begin{align*}
             \Pr \left( |n_1 - m_1(n)|>t \right)
            \le & 2 \exp \left( - \frac{t^2/2}{v_1(n) + t/3} \right) \\
            \le & 2 \exp \left( - \frac{t^2/2}{2v_1(n)} \right) \text{ for } t \le 3v_1(n),\\
            \le & 2 \exp \left( - \frac{t^2}{4m_1(n)} \right) \text{ for } t \le 3v_1(n).
         \end{align*}
         Letting $t = \delta m_1(n)^{(2\beta+d/2)/(2\beta + d) }$ we get \[ \Pr \left( |n_1 - m_1(n)|>\delta m_1(n)^{\frac{2\beta+d/2}{2\beta + d} }\right) \le  2 \exp \left( - \frac{\delta^{2}}4m_1(n)^{\frac{2\beta}{2\beta + d} } \right),\] for $ \delta \le \frac{3v_1(n)}{m_1(n)^\frac{2\beta+d/2}{2\beta + d}}.$  
         
         Using \cite[Lemma 4.1]{rigollet2009optimal}  we get positive constants $c_1, c_2, c', \Delta$ such that for $Q_X$ almost sure all $x \in \cX$ \[
         \sup_{(P, Q) \in \cP} \Pr \Big( \big|\hat g_1^{(n_1)}(x) - g_1(x) \big| > \delta \Big| n_1 \Big) \le c_1 \exp\big( - c_2 n_1 h_1^d \delta^2 \big), \hspace{0.3cm} c'h_1^\beta < \delta < \Delta.
         \] 
         Letting $h_1 = n_1^{-\frac1{2\beta + d}}$ we get 
         \[
         \sup_{(P, Q) \in \cP} \Pr \Big( \big|\hat g_1^{(n_1)}(x) - g_1(x) \big| > \delta \Big| n_1 \Big) \le c_1 \exp\big( - c_2 n_1 ^{\frac{2\beta}{2\beta + d}} \delta^2 \big), \hspace{0.3cm} c'n_1^{-\frac\beta{2\beta + d}} < \delta < \Delta.
         \] 
         
         Let $n^{(0)} = (n_P^{(0)}, n_Q^{(0)})$ such that $\Delta \le \left(\frac{3v_1(n^{(0)})}{m_1(n^{(0)})^\frac{(2\beta+d/2)}{(2\beta + d)}}\right) \bigwedge\left( \frac12 m_1(n^{(0)})^\frac{d/2}{2\beta + d}\right).$ We say $n\ge n^{(0)}$ if $n_P  \ge n_P^{(0)}$ and $n_Q \ge n_Q ^{(0)}.$ For any $n \ge n^{(0)}$ we have  $\delta m(n)^{\frac{2\beta+d/2}{2\beta + d} } \le m(n)/2. $ 
         
         Now, for $n \ge n^{(0)}$, and  $Q_X$ almost surely all $x \in \cX$  \begin{align*}
            &  \Pr \Big( \big|\hat g_1^{(n_1)}(x) - g_1(x) \big| > \delta  \Big) \\ \le &  \Pr \Big( \big|\hat g_1^{(n_1)}(x) - g_1(x) \big| >  \delta, |n_1 - m_1(n)|\le \delta m_1(n)^{\frac{2\beta+d/2}{2\beta + d} }  \Big) \\
              & + \Pr \left( |n_1 - m_1(n)|>\delta m_1(n)^{\frac{2\beta+d/2}{2\beta + d} }\right)  \\
              \le & \Pr \Big( \big|\hat g_1^{(n_1)}(x) - g_1(x) \big| >  \delta, |n_1 - m_1(n)|< m_1(n)/2  \Big) \\
              & + 2 \exp \left( - \frac{\delta^{2}}4m_1(n)^{\frac{2\beta}{2\beta + d} } \right)\\
              \le & \Ex \left[ \Pr \Big( \big|\hat g_1^{(n_1)}(x) - g_1(x) \big| > \delta \Big| n_1 \Big) \indicator_{m_1(n)/2 \le n_1 \le 3m_1(n)/2}  \right]\\
               & + 2 \exp \left( - \frac{\delta^{2}}4m_1(n)^{\frac{2\beta}{2\beta + d} } \right)\\
               \le & c_1 \exp\left( - c_2 \left(\frac{m_1(n)}2\right) ^{\frac{2\beta}{2\beta + d}} \delta^2 \right)\\
                & + 2 \exp \left( - \frac{\delta^{2}}4m_1(n)^{\frac{2\beta}{2\beta + d} } \right),  \text{for } c'\left( \frac{m_1(n)}{2} \right)^{-\frac\beta{2\beta + d}} < \delta < \Delta.
         \end{align*}
         Hence, there exists $c_1', c_2', c', \Delta$ such that for $Q_X$ almost surely all $x \in \cX$ \[
          \Pr \left( |\hat g_1^{(n_1)}(x) - g_1(x)| > \delta \right) \le c_1'\exp\left(-c_2' m_1(n)^{\frac{2\beta}{2\beta + d}} \delta^2 \right),
         \] for $ c'm_1(n)^{-\frac\beta{2\beta + d}} < \delta < \Delta.$
\end{proof}

\subsection{Upper Bounds}


\subsubsection{Proof of Theorem \ref{th:upper-bound}}
\label{proof:upper-bound-labeled}
\begin{proof}
    Since, $\hat \pi_Q^{(n)} = \frac 1{n_Q} \sum_{i=1}^{n_Q} Y_i^{(Q)}$     using Bernstein's inequality 
    \[ 
    \Pr \left( \big|\hat \pi_Q^{(n)}- \pi_Q\big|>t \right) \le 2\exp\left( - \frac{t^2/2}{\pi_Q(1-\pi_Q)/n_Q+ t/(3n_Q)} \right).
    \] Letting $t \le 3\pi_Q(1-\pi_Q)$ we have 
    \[ 
    \Pr \left( \big|\hat \pi_Q^{(n)}- \pi_Q\big|>t \right) \le 2\exp\left( - \frac{n_Qt^2}{4\pi_Q(1-\pi_Q)} \right) .
    \]
    Hence, $\hat \pi_Q ^{(n)}$ converges to $\pi_Q$ at a $((\pi_Q(1-\pi_Q))/n_Q)^{1/2}, \pi_Q(1-\pi_Q))$-rate uniformly over $\cP$.

    Since $n_1 = \sum_{i=1}^{n_P} Y_i^{(P)} + \sum_{i=1}^{n_Q} Y_i^{(Q)}$ from lemma \ref{lemma:rate-g1} we see that $\hat g_1^{(n_1)}$ converges pointwise to   $g_1$ at a $m(n)^{-\beta/(2\beta + d)}$-rate uniformly on $\cP$. Similarly, $\hat g_0^{(n_1)}$ converges pointwise to   $g_0$ at a $\big(n_P + n_Q - m(n)\big)^{-\beta/(2\beta + d)}$-rate.  We refer to lemma \ref{lemma:conc-reg-fn} to conclude 
    $\hat \eta_Q$ converges to $\eta_Q$ at a rate  
   \begin{equation}
   \label{eq:reg-fn-rate-labeled}
     r(n, \pi)= \left\{ \begin{aligned}
    & \sqrt{\frac{(1-\pi_Q)\pi_Q}{n_Q}} +  \sqrt{\pi_Q} \Big( \pi_P n_P + \pi_Q n_Q \Big)^{- \frac{\beta}{2\beta + d}} \\ & +  \sqrt{ (1-\pi_Q)} \Big( (1-\pi_P) n_P + (1-\pi_Q) n_Q \Big)^{- \frac{\beta}{2\beta + d}} 
    \end{aligned}  \right.  = (A) + (B) + (C)
   \end{equation} uniformly on $\cP. $ 
   
   To complete the proof, we derive the worst possible rate over the function class. First, we seek the worst rate with respect to $\pi_Q$.
    Considering them term by term we see that $(A)\le \frac1{2\sqrt{n_Q}}, $ $(B)$ is an increasing function of $\pi_Q$ and hence \[(B) \le \big( \pi_P n_P +  n_Q \big)^{-\frac{\beta}{2\beta + d}}\,.\] By similar logic we see that 
\[(C) \le \big( (1-\pi_P) n_P +  n_Q \big)^{-\frac{\beta}{2\beta + d}}.\] Hence, 
\[ 
\begin{aligned}
r(n, \pi) &\le \frac1{2\sqrt{n_Q}} + \big( \pi_P n_P +  n_Q \big)^{-\frac{\beta}{2\beta + d}} + \big( (1-\pi_P) n_P +  n_Q \big)^{-\frac{\beta}{2\beta + d}}\\
& \le \frac1{\sqrt{n_Q}} + \big( \pi_P n_P +  n_Q \big)^{-\frac{\beta}{2\beta + d}} + \big( (1-\pi_P) n_P +  n_Q \big)^{-\frac{\beta}{2\beta + d}}
\end{aligned}
\,.\] But a concern regarding the above calculation is whether this dominating rate is achievable?
The following argument shows that it is  achieved by $\pi_Q = 1/2.$

\[
\begin{aligned}
r(n, \pi; \pi_Q = 1/2) &= \frac1{2\sqrt{n_Q}} + \frac1{\sqrt{2}} \big( \pi_P n_P +  n_Q/2 \big)^{-\frac{\beta}{2\beta + d}}\\
& \quad + \frac1{\sqrt{2}} \big( (1-\pi_P) n_P +  n_Q/2 \big)^{-\frac{\beta}{2\beta + d}}\\
& \ge \frac1{4\sqrt{n_Q}} + \frac1{{4}} \big( \pi_P n_P +  n_Q \big)^{-\frac{\beta}{2\beta + d}}\\
& \quad+ \frac1{{4}} \big( (1-\pi_P) n_P +  n_Q \big)^{-\frac{\beta}{2\beta + d}}\\
& = \frac14 \left[\frac1{\sqrt{n_Q}} + \big( \pi_P n_P +  n_Q \big)^{-\frac{\beta}{2\beta + d}} \right.\\
&\quad \left.+ \big( (1-\pi_P) n_P +  n_Q \big)^{-\frac{\beta}{2\beta + d}}\right]
\end{aligned}
\]
This explains why $\pi_Q = 1/2$ exhibits the worst behavior.

Next, denoting $f(\pi_P) = \big( \pi_P n_P +  n_Q \big)^{-\frac{\beta}{2\beta + d}}$ we note that 
\[
\max \Big\{f(\pi_P), f(1-\pi_P) \Big\} \le f(\pi_P)+ f(1-\pi_P) \le 2\max \Big\{f(\pi_P), f(1-\pi_P) \Big\},
\] which implies $\max \Big\{f(\pi_P), f(1-\pi_P) \Big\}$ and $f(\pi_P)+ f(1-\pi_P)$ have same rate of convergence. Furthermore, using the fact that $f$ is a decreasing function we get 
\[
\max \Big\{f(\pi_P), f(1-\pi_P) \Big\} = f(\min\{\pi_P, 1-\pi_P\}) \le f(\eps_P),
\] where the last inequality is an equality in the worst possible case ($\pi_P = \eps_P$ or $1-\eps_P$). Hence, we finally get the worst possible rate when $\pi_Q = 1/2$ and $\pi_P = \epsilon_P$ or $1 - \epsilon_P$: 
\[ n_Q^{-1/2} + \big( \eps_P n_P +  n_Q \big)^{-\frac{\beta}{2\beta + d}}. \]



     
     Finally, under $\alpha$-margin condition lemma \ref{lemma:margin-bound} we have 
    \begin{align*}
        \sup_{(P, Q)\in \cP} \Ex \cE_Q(\hat f) & \le C\left( n_Q^{-1/2} +  (\eps_P n_P+n_Q)^{-\frac{\beta}{2\beta + d}}\right)^{1+\alpha}.
    \end{align*}
\end{proof}


\subsubsection{Proof of Theorem \ref{th:upper-bound-unlabeled}}

\begin{proof}
	Let $n_1 = \sum_{i=1}^{n_P} Y_i^{(P)}$ and $n_0 = n_P  - n_1$.

	We use the method by \cite{iyer2014Maximum} to estimate $\pi_Q$ with a Gaussian kernel $K_c(x, y) = A \exp(-c\|x-y\|_2^2)$, where $A>0$ is suitably chosen to satisfy $\int_{\cX^2}K_c(x, y)dxdy = 1.$ This ensures $K_c$ to be a joint density on $\cX^2$. Furthermore, at the limit $c\to\infty$ the joint probability distribution corresponding to $K_c$ converges to an uniform distribution on the line $x = y$. Since $\big(g_0(x) - g_1(x)\big)\big(g_0(y) - g_1(y)\big)$ is a continuous function on a compact support $\cX^2$, this is bounded. We apply bounded convergence theorem to conclude 
\[\lim_{c\to \infty} \int_{\cX^2}\big(g_0(x) - g_1(x)\big)\big(g_0(y) - g_1(y)\big) K_c(x, y) dxdy = \int_{\cX} \big(g_0(x) - g_1(x)\big)^2dx. \]
From the Assumption \ref{assump:separation-conditionals} we have \[
\int_{\cX} \big(g_0(x) - g_1(x)\big)^2dx \ge C^2\,,
\]
which implies 
\[\int_{\cX^2}\big(g_0(x) - g_1(x)\big)\big(g_0(y) - g_1(y)\big) K_c(x, y) dxdy \ge \frac {C^2}2,\] for a large enough $c>0$. 
Denoting  the corresponding feature vector (of the kernel $K_c$) as $\Phi_c$ we see that $\bar A$ in \cite{iyer2014Maximum}, Equation (2) is merely $\int \Phi_c(x) (g_1(x) - g_0(x)) dx.$
Hence we have \[
\begin{aligned}
\bar A^\top \bar A & = \left\langle \int \Phi_c(x) (g_1(x) - g_0(x)) dx, \int \Phi_c(y) (g_1(y) - g_0(y)) dy \right\rangle\\
& = \int _{\cX^2} \left\langle \Phi_c(x), \Phi_c(y) \right\rangle \big(g_0(x) - g_1(x)\big)\big(g_0(y) - g_1(y)\big) dxdy\\
&= \int_{\cX^2} K_c(x, y)\big(g_0(x) - g_1(x)\big)\big(g_0(y) - g_1(y)\big)  dxdy \ge  \frac {C^2}2
\end{aligned}
\]

Using the statement right after Lemma 2 in \cite{iyer2014Maximum} we get 

\[ 2(\hat \pi_Q - \pi_Q)^2 \le \frac{R^2\left(\frac{c^2 + 2c + 2}{n_Q} + \frac 2{n_0} + \frac 2{n_1}\right)\left(1 + \sqrt{\log(4/\delta)}\right)^2 }{{\bar A}^\top {\bar A}- 8R^2\left(\frac1{n_0} + \frac1{n_1}\right) \sqrt{\left(\frac {c^2}{n_0} + \frac1{n_1}\right) \log(2/\delta)}} \]
with probability at least $1-\delta$. Here $c = 1$, $R = \max_{x\in \cX} \|\Phi_c\|$, $n_0$ is the number of observations with $Y = 0$ in source and similarly $n_1$. We note that the numbers $R$ and ${\bar A}^\top {\bar A} \ge \frac {C_k^2}2$ are fixed and $n_0, n_1, n_Q$ are growing. Hence, for sufficiently large $n_0, n_1$ and $n_Q$ we get 
\[
|\hat \pi_Q - \pi_Q| \le C' \left(\sqrt{\frac{1}{n_Q} + \frac 1{n_0} + \frac 1{n_1}}\right) \left(1 + \sqrt{\log(4/\delta)}\right)
\] with probability at least $1-\delta$. In other words, there is a $c>0$ such that  for any $ n_1 $ with probability (conditioned over $ n_1 $) $ \ge 1- e^{-t^2} $ the following holds \[ |\hat \pi_Q- \pi_Q| \le ct \sqrt{\frac1{n_Q} + \frac1{n_0}+\frac1{n_1}}\,.  \]  Recall from the Bernstein inequality in \ref{lemma:rate-g1} we have \[ \pi_P n_P - c't\sqrt{n_P \pi_P (1-\pi_P)} \le  n_1 \le \pi_P n_P +c't \sqrt{n_P \pi_P (1-\pi_P)} \] with probability $\ge 1 - e^{-t^2} $ for $ t \le 2\sqrt{n_P\pi_P(1-\pi_P)} $ for some $ c' .$ Hence, for $ t \le 1\sqrt{2c'n_P\pi_P(1-\pi_P)} $ with probability $ \ge 1- 2e^{-t^2} $ we have 
	\[ \begin{aligned}
	 & |\hat \pi_Q- \pi_Q|\\
	  & \le ct \sqrt{\frac1{n_Q} + \frac1{n_0}+\frac1{n_1}}\\
	 & \le ct \sqrt{\frac1{n_Q} + \frac1{(1-\pi_P)n_P - c't \sqrt{n_P \pi_P (1-\pi_P)} }+\frac1{\pi_Pn_P - c't\sqrt{n_P \pi_P (1-\pi_P)}}}\\
	 & \le c_1t \sqrt{\frac1{n_Q} + \frac1{\eps_P n_P - c't \sqrt{ \eps_Pn_P } }}\\
	 & \le c_2t \sqrt{\frac1{n_Q} + \frac1{\eps_P n_P }}\\
	 & \le c_3 \left(\frac1{\sqrt{n_Q}} + \frac1{\sqrt{n_P\eps_P}}\right)
	\end{aligned} \] for suitably chosen constants. This implies $ \hat \pi_Q $ converges to $ \pi_Q $ at a $ \frac1{\sqrt{n_Q}} + \frac1{\sqrt{n_P\eps_P}} $ rate.

    From lemma \ref{lemma:rate-g1} we see that $\hat g_1^{(n_1)}$ converges pointwise to   $g_1$ at a $(n_P\pi_P)^{-\beta/(2\beta + d)}$-rate and $\hat g_0^{(n_0)}$ converges pointwise to   $g_0$ at a $\big(n_P(1-\pi_P)\big)^{-\beta/(2\beta + d)}$-rate. Rest of the proof is same as in the proof of \ref{proof:upper-bound-labeled}.
\end{proof}

\section{Additional detail for proof of lower bounds}
\label{sec:more-lb-proofs}

\subsection{Proof of Theorem \ref{th:lower-bound}}

    The proof has two parts. The first part shows that the minimax rate is at least $(\eps_P n_P + n_Q)^{-\beta(1+\alpha)/(2\beta+d)}.$ This arises from the difficulties of estimating the non-parametric parts $g_0$ and $g_1.$ The second part shows that the minimax rate is at least $n_Q^{-(1+\alpha)/2}.$ This part is relatively easy to work with and mainly stems from the estimation of parametric part $\pi_Q.$ 
    
    \subsubsection{Difficulty of the non-parametric part} The crux of this part lies in construction of a family of distribution $\{\Pi_i\}_{i=1}^M.$ 
    
    Let $r = c_r (\eps_Pn_P+n_Q)^{-1/(2\beta + d)}, m = \lfloor c_m r ^{\alpha\beta - d}\rfloor,$ where $\alpha\ge 0 $ is the noise condition exponent and $\beta>0 $ is the H\"older smoothness exponent.

    Also, define  $c_r = (1/17) ,\  c_m = 8 \times 17^{\alpha\beta - d}$ and $c_w \in (0, 1)$ is a constant to be picked later. 
    The preceding constants satisfy \[ 8 \le m <\frac12 \left\lfloor \frac 1r \right\rfloor^d. \]  
    To see the first inequality, we observe that $r \le 1/17$ and recall $\alpha\beta \le 1 \le  d.$ 
    Thus \[ m = 8 \times (17r)^{\alpha\beta - d} \ge 8. \] 
    To see the second inequality, we observe that $r^{-1} \ge 16,$ which implies $r^{-1}\le 17\lfloor r^{-1}\rfloor/16.$ 
    Thus \[ m = 8 (17r)^{\alpha\beta} \left(\frac 1{17r}\right)^d \le 8\cdot  16^{-d} \lfloor r^{-1} \rfloor ^d \le\frac1 2 \lfloor r^{-1} \rfloor ^d. \] We also have $2mw = 2mc_w r^d\le 2c_m c_w<1 $ for a suitable $c_w.$
    
    \textit{Construction of $\{\Pi_i\}_{i=1}^M$.}
    Let $r_1 = 1/\lfloor 1/(c_wr)\rfloor$ if $ \lfloor 1/(c_wr)\rfloor $ is even, otherwise let $ r_1 = 1/\left(\lfloor 1/(c_wr)\rfloor+1\right). $
    Let us consider the grid of points \begin{equation}
    \cZ = \{ (1/2+i)r_1 : i = 0, 1, \dots , 1/r_1 -1 \}^d.
    \end{equation} 
    We see that $\cZ$ is a grid of equally spaced points of size $r_1^{-d}.$ For a $z \in \cZ$ we consider the hyper-cube \[ C(z) = \{x \in [0,1]^d: \|x-z\|_\infty\le r_1/2 \}. \] Note that, volume of each of these hyper-cubes is $r_1^d.$ Let $\cZ_1, \cZ_2 \subset \cZ $ be subsets of size $m$. Moreover, we let $\cZ_1 $ and $\cZ_2$ are disjoint. We define a bijection $u : \cZ_1 \to \cZ_2 $ which shall be used to construct the conditional densities. We define $\cZ_0 = \cZ \backslash (\cZ_1 \cup \cZ_2).$  Note that, $ \cZ $ has even number of points and $ |\cZ_1| = |\cZ_2| .$ Hence,  $ \cZ_0 $ has even number of points. We further divide $ \cZ_0 $ in two sets $ \cZ_3, \cZ_4 $ of equal sizes.  We shall define a set of distributions parametrized by $\sigma\in \{ -1, 1 \}^{\cZ_1}.$ 

    \noindent\textit{Conditional densities.}  For $a >0$ we define a function $v_a$ supported on on $\reals$ which will be used heavily for the construction of conditional densities.

    Define   
    \[u_{a}(x)=     
    \begin{cases}
    0 & \text{ for }x<0\\
    \frac{ \int_{0}^x e^{-\frac1{at(1-t)}}dt}{\int_{0}^1 e^{-\frac1{at(1-t)}}dt} & \text{ for }0 \le x \le 1\\
    1 & \text{ for } x>1
    \end{cases}\]
    and 
    \begin{equation}
    \label{eq:u-function}
    v_{a}(x)=     \begin{cases}    
    \big( 1-u_a(x) \big)^{1/\alpha}  & \quad \text{for }\beta < 1,\\
     \big( 1 -u_a(x) \big) & \quad \text{for }\beta \ge 1.
    \end{cases}
    \end{equation}

    According to lemma \ref{lemma:smooth-u-fn} we choose $ a $ such that $ v_a \equiv v $ is $ (\beta, C_\beta) $ H\"older smooth.  
    Therefore, the following functions are $(\beta, C_\beta)$-H\"older smooth: 
    \[ z \in \cZ, \hspace{0.3cm} \eta_z(x) =  \frac{\mu_\Delta r^\beta}3 v \left(\frac{2\|x-z\|_\infty }{r_1}\right) \]
    and     \[ z \in \cZ, \hspace{0.3cm} \xi_z(x) =   v \Bigg(\frac{2\|x-z\|_\infty }{br_1} - \frac2{r_1}\left(\frac1b-1\right)\Bigg). \] Here $ \mu_\Delta<1 $ is chosen later.  
    For a parameter $\sigma$ the  construction of  conditional densities are given below.

    \begin{gather}
    \left\{    
    \begin{aligned}
    g_1^{\sigma}(x) & = \begin{cases}
    1+  \sigma(z) \sqrt{\eps_P} \eta_z(x) & \hspace{0.3cm} x \in C(z),\ z \in \cZ_1,\\
    1-  \sigma(z) \sqrt{\eps_P} \eta_{f(z)}(x) & \hspace{0.3cm} x \in C(f(z)),\  z \in \cZ_1,\\
    1 + \xi_z(x) & \hspace{0.3cm} x \in C(z),\  z \in \cZ_3,\\
    1 - \xi_z(x) & \hspace{0.3cm} x \in C(z),\  z \in \cZ_4,
    \end{cases} \\
    g_0^{\sigma}(x)&  = \begin{cases}
    1-  \sigma(z)   \eta_z(x) & \hspace{0.3cm} x \in C(z),\ z \in \cZ_1,\\
    1+  \sigma(z)   \eta_{f(z)}(x) & \hspace{0.3cm} x \in C(f(z)),\  z \in \cZ_1,\\
    1 - \xi_z(x) & \hspace{0.3cm} x \in C(z),\  z \in \cZ_3,\\
    1 + \xi_z(x) & \hspace{0.3cm} x \in C(z),\  z \in \cZ_4.\end{cases}
    \end{aligned}
    \right.
    \label{eq:density-class-supervised-density}
    \end{gather}

    We also define $\pi_Q^{\sigma} = 1/2$ and $\pi_P^\sigma = 1-\eps_P.$ We then define the probabilities \[P_{\sigma}(X \in A, Y= y) = \int_A [\pi_P^{\sigma} g^{\sigma}_1(x)\indicator(y = 1) + (1-\pi_P^{\sigma}) g^{\sigma}_0(x) \indicator (y = 0) ] dx\] and
    \begin{equation}
    Q_{\sigma}( X \in A, Y = y) = \int_A [\pi_Q^
    {\sigma}g^{\sigma}_1(x)\indicator(y = 1) + (1-\pi^{\sigma}_Q) g^{\sigma}_0(x) \indicator (y = 0) ] dx.
    \label{eq:dist-Q-labeled-density}
    \end{equation}
    Given the source and target distributions we define the joint distribution of $\dataunlabeled$ as \begin{equation}
    \label{eq:dist-labeled-joint-density}
    \Pi_\sigma = P_\sigma^{\otimes n_P} \otimes Q_{\sigma, X} ^{\otimes n_Q}
    \end{equation}
    
    Here, $\eps_P \le \pi_P^\sigma \le 1-\eps_P.$ Also,  for any $x \in \Omega.$ Hence, $\mu_- \le q^{\sigma}_X(x) \le  \mu _+ $ for a suitable $ \mu_\Delta. $ Furthermore, $\Omega = [0,1]^d$ is a regular set. Hence, $q^\sigma _X$ satisfies strong density assumption.

    For such a construction we refer to lemma 6.4, where it is shown $Q_\sigma$ satisfies $\alpha$-margin condition with constant $C_\alpha.$

    Let $ \cF $ be the set of all classifier relevant to this classification problem. For $\sigma \in \{-1, 1\}^{\cZ_1}$ let  $f_\sigma$ be the Bayes classifier corresponding to the probability distribution $Q_\sigma$ defined as $ f_\sigma (x) = \indicator\{\eta_{ Q_\sigma }(x) \ge 1/2\}.$  For $ \sigma,\sigma'\in\{-1, 1\}^{\cZ_1} $ define $ \bar\rho (\sigma,\sigma') \coloneqq \cE_{\sigma} (f_{\sigma'})   $ and  $\rho(\sigma,\sigma') = \text{card} \{z\in\cZ_1 :\sigma(z) \neq \sigma'(z) \} $ as the Hamming distance. Then \begin{align*}
    \bar\rho (\sigma,\sigma') & = 2\Ex _{Q_{\sigma, X}} \left[\left | \eta_Q^\sigma(X) - \frac12 \right | \mathbf{1}\left( f_\sigma(X) \neq f_{\sigma'}(X)  \right)\right]\\
    & \ge c_1   r_1^d r^\beta  \rho(\sigma, \sigma')\\
    & \ge c_1 c_w^d r^{\beta+d}\rho(\sigma, \sigma').
    \end{align*}
    
    We recall Varshamov-Gilbert bound, which shall be used to construct the probability class.

    Let $\{ \sigma_0,\ldots , \sigma_M \} \subset \{-1,1\}^m  $ be the choice obtained from the  lemma \ref{lemma:varshamov-gilbert}.  Note that for such a choice $\rho(\sigma_i, \sigma_j) \ge m/8$ whenever $i\neq j.$
    
    Then  \begin{align*}
    \bar\rho (\sigma_i, \sigma_j) &\ge c_1c_w^d  r^{\beta+d}  \frac m8\\ 
    &\ge c_1c_w^d  r^{\beta+d}   r^{\alpha\beta -d}  \\
    & \ge c' r^{\beta (1+\alpha)}\\
    & = c' (\eps_P n_P)^{-\frac{\beta(1+\alpha)}{2\beta + d}}\\
    & \triangleq 2s
    \end{align*}

    Now we bound the Kulback-Leibler divergence between the joint distributions $\Pi_{\sigma_i}.$ Using lemma \ref{lemma:kl-distance-joint} we get 
    \begin{align*}
    KL(\Pi_{\sigma_i}|| \Pi_{\sigma_j}) & \le c_w^dK(d, \alpha, \beta)\rho(\sigma_i, \sigma_j) \\
    & \le  c_w^dK(d, \alpha, \beta)m \\
    & \le \frac 19 \log_2(M) \\
    \end{align*}
    for suitable $c_w < 1.$

    Finally we appeal to proposition 6.1 (and Markov's inequality) to obtain the minimax rate \begin{align*}
    \sup_{(P,Q)\in \Pi} \Ex \cE_Q (\hat f)& \ge \sup_{(P,Q)\in \Pi} s \Pr_\Pi \left ( \cE_Q (\hat f)\ge s \right)\\
    & \ge s \sup_{\sigma\in \{-1,1\}^{\cZ_1}} \Pi_\sigma \left(\cE_{Q^\sigma} (\hat f)\ge s \right) \\
    & \ge s \frac {3-2\sqrt{2}}{8}\\
    & \ge C(\eps_Pn_P)^{-\frac{\beta(1+\alpha)}{2\beta + d}}.
    \end{align*}

Proof of the parametric part will be exactly same as in the proof of theorem 4.1, where we get the lower bound
\[
    \sup_{(P, Q) \in \Pi} \Ex \big[ \cE_Q(\hat f) \big] \ge c n_Q^{-\frac{1+\alpha}{2}}.
\]

Finally, we combine the two bounds to get \[ 
\begin{aligned}
\sup_{(P, Q) \in \Pi} \Ex \big[ \cE_Q(\hat f) \big] & \ge c n_Q^{-\frac{1+\alpha}{2}} \vee c(\eps_Pn_P + n_Q)^{-\frac{\beta(1+\alpha)}{2\beta + d}}\\
& \ge c' \Bigg( (\eps_Pn_P + n_Q)^{-\frac{\beta}{2\beta + d}} + \frac{1}{\sqrt{n_Q}}  \Bigg)^{1+\alpha} .
\end{aligned}
\]

\subsection{Additional Lemmas}

\begin{lemma}
\label{lemma:inequality-1}
For any $0 \le x \le 1/3$ we have \[ \log\left(\frac{1+x}{1-x}\right) \le 3x . \] 
\end{lemma}

\begin{proof}
    Note that for $0 \le x\le 1/3$ we have $x - 3x^2 \ge 0.$ Hence, \begin{align*}
        1+x & \le 1 + 2x -3x^2 \\
         & \le (1-x)(1+3x)\\
         & \le (1-x) e^{3x}.\end{align*} Taking logarithm in both sides we have the result. 
\end{proof}

\begin{lemma}
\label{lemma:kl-conditional-density}
Let $|e|\le 1.$ For $\sigma \in \{-1, 1\}^{\cZ_1}$ let $p_\sigma$ be a probability density function \[ p_{\sigma}(x) = \begin{cases}
1+  \sigma(z)e \eta_z(x) & \hspace{0.3cm} x \in C(z),\ z \in \cZ_1,\\
1-  \sigma(z) e \eta_{f(z)}(x) & \hspace{0.3cm} x \in C(f(z)),\  z \in \cZ_1,\\
1 + \xi_z(x) & \hspace{0.3cm} x \in C(z),\  z \in \cZ_3,\\
1 - \xi_z(x) & \hspace{0.3cm} x \in C(z),\  z \in \cZ_4,
\end{cases} , \] as defined in \ref{eq:density-class-supervised-density} and Section 6, in main document, equation 6.3. For $\sigma, \sigma'\in \{-1, 1\}^{\cZ_1}$ let us define the Hamming distance as $\rho(\sigma, \sigma') = \sum_{z\in \cZ_1} \indicator_{\{\sigma(z) \neq \sigma'(z)\}}.$ Then for $\sigma, \sigma'\in \{-1, 1\}^{\cZ_1}$ \begin{equation}
        KL(p_{\sigma}||p_{\sigma'}) \le e^2 c_w^d K(d, \alpha, \beta) r^{2\beta+d}\rho(\sigma, \sigma'),
    \end{equation} for some constant $K(d, \alpha, \beta)$ only being dependent on $d, \alpha$ and $beta.$
\end{lemma}

\begin{proof}

\begin{align*}
 KL(p_{\sigma}||p_{\sigma'})  = & \int \log\left( \frac{p_\sigma(x)}{p_{\sigma'}(x)} \right) p_\sigma (x) dx\\
= & \sum_{z \in \cZ_1} \int_{C(z)} \log\left( \frac{1 + \sigma(z)e\eta_z(x)}{1 + \sigma'(z)e\eta_z(x)} \right)  (1 + \sigma(z)e\eta_z(x))dx \\
    +& \sum_{z \in \cZ_1} \int_{C(f(z))} \log\left( \frac{1 - \sigma(z)e\eta_{f(z)}(x)}{1 - \sigma'(z)e\eta_{f(z)}(x)} \right)  (1 - \sigma(z)e\eta_{f(z)}(x))dx\\
= & \sum_{z \in \cZ_1} \int_{C(z)} \log\left( \frac{1 + \sigma(z)e\eta_z(x)}{1 + \sigma'(z)e\eta_z(x)} \right)  (1 + \sigma(z)e\eta_z(x))dx \\
     & + \sum_{z \in \cZ_1} \int_{C(z)} \log\left( \frac{1 - \sigma(z)e\eta_{z}(x)}{1 - \sigma'(z)e\eta_{z}(x)} \right)  (1 - \sigma(z)e\eta_{z}(x))dx \\
= & \sum_{\sigma(z)\neq \sigma'(z)} \int_{C(z)} \log\left( \frac{1 + \sigma(z)e\eta_z(x)}{1 + \sigma'(z)e\eta_z(x)} \right)  (1 + \sigma(z)e\eta_z(x))dx \\
     & + \sum_{\sigma(z)\neq \sigma'(z)} \int_{C(z)} \log\left( \frac{1 - \sigma(z)e\eta_{z}(x)}{1 - \sigma'(z)e\eta_{z}(x)} \right)  (1 - \sigma(z)e\eta_{z}(x))dx 
\end{align*}
It's easy to see that above expression is invariant with respect to the sign of $b.$ So, we assume $b>0.$

\begin{align*}
 KL(p_{\sigma}||p_{\sigma'})  = & \sum_{\sigma(z)\neq \sigma'(z)} \int_{C(z)} \log\left( \frac{1 + \sigma(z)e\eta_z(x)}{1 - \sigma(z)e\eta_z(x)} \right)  (1 + \sigma(z)e\eta_z(x))dx \\
     & + \sum_{\sigma(z)\neq \sigma'(z)} \int_{C(z)} \log\left( \frac{1 - \sigma(z)e\eta_{z}(x)}{1 + \sigma(z)e\eta_{z}(x)} \right)  (1 - \sigma(z)e\eta_{z}(x))dx \\
    = & \sum_{\sigma(z)\neq \sigma'(z)} \int_{C(z)} \log\left( \frac{1 + \sigma(z)e\eta_z(x)}{1 - \sigma(z)e\eta_z(x)} \right)  2 \sigma(z)e\eta_z(x)dx \\
    = & \rho(\sigma, \sigma') \int_{C(z)} \log\left( \frac{1 + e\eta_z(x)}{1 - e\eta_z(x)} \right)  2 e\eta_z(x)dx\\
    \le  &\frac 23 \rho(\sigma, \sigma') \int_{C(z)} e^2\eta_z^2(x)dx, \quad \text{using \ref{lemma:inequality-1}}, \\
    = & K(d, \alpha, \beta) e^2 r^{2\beta}r_1^d \rho(\sigma, \sigma')   \\
    = &  K(d, \alpha, \beta) e^2 c_w^dr^{2\beta+d}\rho(\sigma, \sigma').
\end{align*}

\end{proof}

\begin{lemma}
\label{lemma:kl-decomposition}
Let $P, Q$ be two probability distributions defined on $ \Omega\times \{0, 1\}$ defined as \[P(X \in A, Y= y) = \int_A [\pi_P p_1(x)\indicator(y = 1) + (1-\pi_P) p_0(x) \indicator (y = 0) ] dx\] and \[Q( X \in A, Y = y) = \int_A [\pi_Q q_1(x)\indicator(y = 1) + (1-\pi_Q) q_0(x) \indicator (y = 0) ] dx\] for any $y \in \{0, 1\}$ and Borel subset $A$ of $\Omega,$ where $0 \le \pi_P, \pi_Q \le 1$ and $p_0, p_1, q_0, q_1$ are probability densities defined on $\Omega.$ Let $U\sim \text{Ber}(\pi_P)$ and $V\sim \text{Ber}(\pi_Q).$ Then  \[KL(P||Q) = KL(U|| V) + \pi_P KL(p_1||q_1) + (1-\pi_P) KL(p_0||q_0). \]
\end{lemma}

\begin{proof}
    Let $(X, Y) $ be a generic pair following the distributions $P$ and $Q.$ Then the lemma directly follows from the decomposition  \[KL(P||Q) = KL(P_Y||Q_Y) + E_{P_Y}\big[KL(P_{X|Y}||Q_{X|Y})\big].\]
\end{proof}

\begin{lemma}
\label{lemma:margin-condition}
For $\sigma \in \{-1, 1\}^{\cZ_1}$ let $Q_\sigma$ be the distribution defined in \ref{eq:dist-Q-labeled-density}.  Then for any $\sigma,$  $Q_\sigma$ satisfies $\alpha$-margin condition with constant $C_\alpha.$
\end{lemma}

\begin{proof}

 We recall the conditional densities \ref{eq:density-class-supervised-density}   \[g_1^{\sigma}(x) = \begin{cases}
1+  \sigma(z) \sqrt{\eps_P} \eta_z(x) & \hspace{0.3cm} x \in C(z),\ z \in \cZ_1,\\
1-  \sigma(z) \sqrt{\eps_P} \eta_{f(z)}(x) & \hspace{0.3cm} x \in C(f(z)),\  z \in \cZ_1,\\
1 + \xi_z(x) & \hspace{0.3cm} x \in C(z),\  z \in \cZ_3,\\
1 - \xi_z(x) & \hspace{0.3cm} x \in C(z),\  z \in \cZ_4,
\end{cases} \] and 
\[
g_0^{\sigma}(x) = \begin{cases}
1-  \sigma(z) \sqrt{\eps_P} \eta_z(x) & \hspace{0.3cm} x \in C(z),\ z \in \cZ_1,\\
1+  \sigma(z) \sqrt{\eps_P} \eta_{f(z)}(x) & \hspace{0.3cm} x \in C(f(z)),\  z \in \cZ_1,\\
1 - \xi_z(x) & \hspace{0.3cm} x \in C(z),\  z \in \cZ_3,\\
1 + \xi_z(x) & \hspace{0.3cm} x \in C(z),\  z \in \cZ_4,
\end{cases}
\]

From \[\eta_Q^\sigma(x) - \frac12 = \frac{\pi_Q g_1^\sigma (x) - (1-\pi_Q^\sigma)g_0^\sigma(x)}{2\pi_Q g_1^\sigma (x) + 2(1-\pi_Q^\sigma)g_0^\sigma(x)} = \frac{g_1^\sigma(x) - g_0^\sigma(x)}{2(g_1^\sigma(x)+g_0^\sigma(x))}\] we get  
\[
\eta_Q^\sigma(x) - 1/2 = \begin{cases} \frac{ (1+\sqrt{\eps_P})\sigma(z)\eta_z(x) }{2 - (1-\sqrt{\eps_P})\sigma(z)\eta_z(x)} & \quad x \in C(z),\ z \in \cZ_1,\\
\frac{-(1+\sqrt{\eps_P} )\sigma(z)\eta_{f(z)}(x) }{2 + (1-\sqrt{\eps_P} )\sigma(z)\eta_{f(z)}(x)} & \quad x \in C(f(z)),\  z \in \cZ_1,\\
\frac12 \xi_z(x) & \quad x \in C(z),\  z \in \cZ_3,\\
-\frac12 \xi_z(x) & \quad x \in C(z),\  z \in \cZ_4.
\end{cases}
\]

Note that \[ \big| \eta_Q^\sigma(x) - 1/2 \big| \ge  \begin{cases} \frac14 \eta_z(x)  & \quad x \in C(z),\ z \in \cZ_1\cup \cZ_2,\\
\frac14 \xi_z(x) & \quad x \in C(z),\  z \in \cZ_0.
\end{cases} \]
Rest of the proof is similar as in the proof of lemma 6.4.
\end{proof}

\begin{lemma}
\label{lemma:kl-distance-joint}
Let $\Pi_\sigma$ be the probability distribution as defined in equation \ref{eq:dist-labeled-joint-density}. For $\sigma, \sigma' \in \{-1, 1\}^{\cZ_1}$ we have \[  KL(\Pi_\sigma|| \Pi_{\sigma'}) \le  2c_w^dK(d, \alpha, \beta)c_r^{2\beta + d}\rho(\sigma, \sigma'). \] 
\end{lemma}

\begin{proof}
	From lemmas \ref{lemma:kl-decomposition} and \ref{lemma:kl-conditional-density} we get 
	\begin{align*}
	    KL(Q_{\sigma}||Q_{\sigma'}) & = \frac 12 KL(g_1^{\sigma}||g_1^{\sigma'}) + \frac 12 KL(g_0^{\sigma}||g_0^{\sigma'})\\
	    & \le \frac 12 (\eps_P + 1) c_w^d K(d, \alpha, \beta) r^{2\beta+d}\rho(\sigma, \sigma') \\
	    & \le  c_w^d K(d, \alpha, \beta) r^{2\beta+d} \rho(\sigma, \sigma')
	\end{align*} and 
	\begin{align*}
	    KL(P_{\sigma}||P_{\sigma'}) & = (1-\eps_P) KL(g_1^{\sigma}||g_1^{\sigma'}) + \eps_P KL(g_0^{\sigma}||g_0^{\sigma'})\\
	    & \le  (\eps_P(1-\eps_P)+\eps_P) c_w^dK(d, \alpha, \beta) r^{2\beta+d}\rho(\sigma, \sigma') \\
	    & \le 2\eps_P c_w^dK(d, \alpha, \beta) r^{2\beta+d}\rho(\sigma, \sigma').
	\end{align*}
	Hence \begin{align*}
	    KL(\Pi_\sigma||\Pi_{\sigma'}) & = n_P KL(P_{\sigma}||P_{\sigma'}) + n_Q KL(Q_{\sigma}||Q_{\sigma'}) \\
	    & \le 2(\eps_Pn_P + n_Q) c_w^dK(d, \alpha, \beta) r^{2\beta+d}\rho(\sigma, \sigma') \\
	    & \le 2c_w^d K(d, \alpha, \beta)(\eps_Pn_P + n_Q)  (\eps_Pn_P + n_Q) ^{-\frac{2\beta + d}{2\beta + d}}\rho(\sigma, \sigma')\\
	   & \le  2c_w^dK(d, \alpha, \beta)c_r^{2\beta + d} (\eps_Pn_P + n_Q) (\eps_Pn_P + n_Q) ^{-\frac{2\beta + d}{2\beta + d}}\rho(\sigma, \sigma') \\
	   & \le 2c_w^dK(d, \alpha, \beta)c_r^{2\beta + d}\rho(\sigma, \sigma').
	\end{align*} 
\end{proof}

\begin{lemma}
	\label{lemma:smooth-u-fn}
	For any pair $ (\beta, C_\beta) $ of positive numbers there exists an $ M>0 $ such that the function $v_a $ (defined in equation \eqref{eq:u-function}) is $ \beta $ smooth with constant $ C_\beta $.  
\end{lemma}

\begin{proof}
	If $ \beta < 1 $ then \[ \begin{aligned}
	& \left|(1-u_a)^{1/\alpha}(x) - \big((1-u_{a})^{1/\alpha}\big)(y)\right|\\
	& \le \max_{y\ge 0} \frac{d}{dy}\big((1-u_a)^{1/\alpha}\big)(y) |x - x_0|\\
	& \le \frac{L}{a} |x - x_0|^{\beta}.
	\end{aligned} \]
	If $ \beta \ge 1 $
	 using Taylor's approximation theorem we get 
	\[ \begin{aligned}
	& \left|(1-u_a)(x) - (1-u_{a})_{x_0}^{(\beta)}\right|\\
	 & \le \max_{y\ge 0} \frac{(1-u_a)^{(\lfloor\beta\rfloor + 1)}(y)}{(\lfloor\beta\rfloor + 1)!} |x - x_0|^{(\lfloor\beta\rfloor + 1)}\\
	& \le \frac{L}{a} |x - x_0|^{\beta}.
	\end{aligned} \] for some $ L>0 ,$ if $ |x - x_0|\le 1. $ Hence, for a suitable $ a ,$  $ v_a $  is $ (\beta, C_\beta, 1) $ smooth. 
\end{proof}

\end{document}